\definecolor{darkblue}{RGB}{0,0,96}
\declaretheorem[name=Theorem]{theorem}
\declaretheorem[name=Conjecture,sibling=theorem]{conjecture}
\crefname{conjecture}{conjecture}{conjectures}
\declaretheorem[name=Remark,sibling=theorem]{remark}
\declaretheorem[name=Example,sibling=theorem]{example}
\newcommand{\CH}{\mathcal{H}}
\newcommand{\CP}{\mathcal{P}}
\newcommand{\BZ}{\mathbb{Z}}
\newcommand{\BQ}{\mathbb{Q}}
\newcommand{\hy}{-}
\DeclareMathOperator{\SL}{\mathit{sl}}
\DeclareMathOperator{\ALG}{alg}
\DeclareMathOperator{\RED}{red}
\newcommand{\Thmref}[1]{\hyperref[#1]{\nameCref{#1}~\ref{#1}}}
\newcommand{\thmref}[1]{\Thmref{#1}}
\title{On stable $\SL_3$\hy homology of torus knots}
\author{Eugene Gorsky}
\address{Department of Mathematics, Columbia University \newline 2990 Broadway
New York, NY 10027 USA}
\email{egorsky@math.columbia.edu}
\author{Lukas Lewark}
\thanks{The first author is partially supported by the grants RFBR–13-01-00755, NSh-4850.2012.1.
The second author is supported by the EPSRC-grant EP/K00591X/1.}
\address{Department of Mathematical Sciences,
Durham University\newline 
Science Labs,
South Road, Durham
DH1 3LE, UK }
\email{lukas.lewark@durham.ac.uk}
\date{April 2, 2014}
\begin{document}
\maketitle
\begin{abstract}
The stable Khovanov-Rozansky homology of torus knots has been conjecturally described as the Koszul homology of an explicit non-regular sequence of polynomials.
We verify this conjecture against newly available computational data for $\SL_3$-homology.
Special attention is paid to torsion. In addition, explicit conjectural formulae are given for the $\SL_N$-homology of $(3,m)$-torus knots for all $N$ and $m$, which are motivated by higher categorified Jones-Wenzl projectors.
Structurally similar formulae are proven for Heegard-Floer homology.
\end{abstract}

\section{Introduction}

Torus knots are among the simplest and best understood knots, and their invariants have been studied and computed using various mathematical and physical tools. For example, Jones and Rosso \cite{RJ} computed all $\SL_N$\hy quantum invariants of torus knots explicitly. 

The computation of Khovanov and Khovanov-Rozansky homology of torus knots remains an important open problem in knot theory. The experimental computations of Bar-Natan, Shumakovitch, Turner \cite{BN,katlas,shu,turner} and others revealed a rich structure, including nontrivial torsion, in $\SL_2$\hy homology of torus knots.
On the other hand, the main conjecture of \cite{GORS} related the HOMFLY-PT homology of $(n,m)$\hy torus knots to finite dimensional representations of rational Cherednik algebras with parameter $m/n$. In the limit $m\to \infty$, this construction has been used in \cite{GOR} for a conjectural description of stable $\SL_N$\hy homology  of torus knots. For $N=2$, this conjecture has been verified against the experimental data for stable torus knots with up to 8 strands, and they agree both in reduced and in unreduced homology. 

\begin{restatable}{conjecture}{conjOne}(\cite{GOR})
\label{main conj}
Consider the triply graded algebra $\CH_n$ with even generators $x_0,\ldots,x_{n-1}$ and odd generators $\xi_0,\ldots,\xi_{n-1}$
of degrees
$$
\deg x_k=q^{2k+2}t^{2k},\quad \deg \xi_{k}=a^2q^{2k}t^{2k+1}.
$$
It is equipped with the family of Koszul differentials $d_N$ defined by the equations:
\begin{equation}
\label{def d}
d_N(x_k)=0,\quad d_N(\xi_k)=\sum_{i_1+\ldots+i_N=k}x_{i_1}\cdot\ldots \cdot x_{i_N}.
\end{equation}
Then the stable $\SL_N$\hy Khovanov-Rozansky homology of the $(n,\infty)$\hy torus knot is isomorphic to $H^{*}(\CH_n,d_N)$
after the regrading $a=q^N$.
\end{restatable}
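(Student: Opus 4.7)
The plan is to tackle the conjecture in two stages. First, since the abstract only promises verification for $\SL_3$, I would carry out this verification explicitly: compute $H^*(\CH_n, d_3)$ for small $n$ by noting that $\CH_n$ is a tensor product of a polynomial algebra on the $x_k$ with an exterior algebra on the $\xi_k$, so each $(a,q,t)$-multidegree gives a finite-dimensional piece and $d_3$ restricts to a matrix whose cohomology one can compute mechanically. In parallel, I would read off the stable $\SL_3$-homology of $T(n,m)$ from the newly available data, choosing $m$ large enough that the Poincaré polynomial in the relevant range stops changing; the stabilization threshold can itself be checked by comparing successive values of $m$. Matching $a=q^3$, one then compares triply-graded Poincaré series. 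Upgrading agreement of series to a genuine isomorphism over $\BZ$, including torsion, requires a finer argument than dimension counts.

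For a conceptual proof valid for all $N$, the natural framework is the categorified infinite twist projector $P_n$. The stable homology of the $(n,\infty)$-torus knot should coincide with the homology of $P_n$, which has been investigated in the Soergel bimodule setting by Hogancamp, Abel--Hogancamp, and Elias--Hogancamp. The key algebraic step is to identify the HOMFLY-PT homology of $P_n$ with $\CH_n$---with the $x_k$ arising from symmetric-function/$y$-ification generators and the $\xi_k$ from Koszul or dot generators---and then to realize Rasmussen's $d_N$ spectral sequence differential as the explicit formula~\eqref{def d}, recognizing the sum $\sum_{i_1+\cdots+i_N=k} x_{i_1}\cdots x_{i_N}$ as the complete homogeneous symmetric function $h_N$ evaluated in the appropriate shifted variables.

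The hard part is the non-regularity of the sequence $\{\sum x_{i_1}\cdots x_{i_N}\}_k$: for a regular Koszul sequence the cohomology collapses to the top degree, but here it spreads across all homological degrees and carries rich torsion. Any spectral sequence comparing the two sides will therefore have nontrivial higher pages that must be tracked explicitly, and numerical agreement of Poincaré polynomials over $\BQ$ will not by itself settle the integral structure. Controlling this torsion---the feature singled out in the abstract---is where both a general proof and even the $\SL_3$ verification require real care, and I would expect the bulk of the paper's work to be organized around extracting integral information from both the Koszul side of \eqref{def d} and the Khovanov--Rozansky chain complexes.
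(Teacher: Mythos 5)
The statement you are asked to prove is labelled a conjecture, and the paper does not prove it: it only \emph{verifies} it in special cases, so the honest benchmark for your proposal is the paper's partial evidence rather than a complete argument. Measured against that, your first stage is essentially what the paper does. The authors compute $H^*(\CH_n,d_N)$ explicitly for $n=2,3$ and by computer (Singular) for $n=4,5$, compare the resulting Poincar\'e series at $a=q^3$ with FoamHo computations of $\SL_3$-homology of finite torus knots such as $T(3,83)$, $T(4,35)$, $T(5,14)$, and record where the finite homology first diverges from the stable model. They also address the integral structure exactly in the way you anticipate is needed: not by a spectral sequence, but by exhibiting explicit cycles (the elements $t_p$, $A$, $B$ in Section 2) whose images under $d_N$ vanish modulo $p$ but not over $\BZ$, producing $p$-torsion classes that are then matched against torsion observed in the data. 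Your caveat that Poincar\'e-series agreement over $\BQ$ does not settle the integral statement is correct and is also the paper's implicit position; no isomorphism is established.

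Two concrete gaps relative to the paper. First, the one case in which the conjecture is actually \emph{proved} --- $n=2$ for all $N$ --- is obtained by comparing the closed-form Koszul answer \eqref{n=2} with Cautis's computation of the $\SL_N$-homology of the $(2,\infty)$-torus knot; this bypasses any stabilization-threshold heuristics and is absent from your plan. Your suggestion to detect stabilization by ``comparing successive values of $m$'' is only a heuristic and cannot certify that the limit has been reached, whereas citing a known closed formula for the stable object does. Second, your stage two (identifying the homology of the categorified projector $P_n$ with $\CH_n$ and realizing Rasmussen's $d_N$ as the formula \eqref{def d}) is a research program, not a proof: the identification of the HOMFLY-PT homology of $P_n$ with the free supercommutative algebra $\CH_n$ is itself conjectural, and even granting it, the compatibility of the $d_N$ differentials with the integral (torsion-carrying) structure is precisely the open content of the conjecture. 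Neither you nor the paper closes this; you should present stage two as motivation, not as a step in a proof.
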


Recently the second named author developed a computer program \cite{L,foam} that allows one to compute integral $\SL_3$\hy Khovanov-Rozansky homology of any knot, in reasonable time for knots such as the $(5,14)$\hy\ or $(6,7)$\hy torus knots.
The main goal of this article is to verify \thmref{main conj} for $N=3$ using the newly available experimental data, see \thmref{sec4}.

In \thmref{sec2}, we study \thmref{main conj}, proving it to be correct for the $(2,\infty)$\hy torus knot,
and giving an explicit formula for the 3\hy strand case. Reduced homology is related to unreduced. 
We show that for a prime $p > N + 1$, the Koszul model for stable $\SL_N$\hy homology of the $p$\hy strand torus knot has $p$\hy torsion, and give a formula for the Koszul model of $\SL_N$\hy homology with $\BZ_N$\hy coefficients, which has a relatively simple form.
Note that while a definition of $\SL_N$\hy homology with integer or $\BZ_p$\hy coefficients may be expected to exist, it has so far only been given for $N = 2$ and $N = 3$.

For 3\hy strand finite torus knots $T(3,m)$, we also write down explicit conjectural formulae for the Poincar\'e polynomials of $\SL_N$\hy homology (see \thmref{sec3}).  Namely, for each of the 4 standard Young tableaux $T$ with 3 boxes we define a certain explicit rational function $\CP_{T}(N)$.

\begin{restatable}{conjecture}{conjTwo}
\label{conj:projectors}
The Poincar\'e series of the unreduced $\SL_N$\hy homology of the $(3,3k+1)$\hy torus knot equals (up to an overall shift of $q^{3k(N-1)-2}$):
$$
\CP(T(3,3k+1),\SL_N)=\CP_{\text{\scriptsize\young(123)}}(N)+q^{6k}t^{4k}\CP_{\text{\scriptsize\young(12,3)}}(N)+q^{6k}t^{4k}\CP_{\text{\scriptsize\young(13,2)}}(N)+q^{12k}t^{6k}\CP_{\text{\scriptsize\young(1,2,3)}}(N).
$$
The Poincar\'e series of the unreduced $\SL_N$\hy homology of the $(3,3k+2)$\hy torus knot equals (up to an overall shift of $q^{3k(N-1)-3}$):
\begin{align*}
\CP(T(3,3k+2),\SL_N)=\ &\CP_{\text{\scriptsize\young(123)}}(N)+q^{6k}t^{4k}\CP_{\text{\scriptsize\young(12,3)}}(N)+ \\[2ex]
&q^{6k+4}t^{4k+2}\CP_{\text{\scriptsize\young(13,2)}}(N)+q^{12k+4}t^{6k+2}\CP_{\text{\scriptsize\young(1,2,3)}}(N).
\end{align*}
\end{restatable}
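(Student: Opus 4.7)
The plan is to establish a categorified Rosso--Jones decomposition for the $3$-strand torus braid: express the Khovanov--Rozansky complex of $T(3,m)$ as a direct sum indexed by standard Young tableaux with $3$ boxes, so that each tableau $T$ contributes the rational function $\CP_T(N)$ weighted by a bigraded scalar determined by how the braid acts on the irreducible summand whose shape underlies $T$. This is the categorical analogue of the Rosso--Jones formula of \cite{RJ}, which decomposes the quantum invariant of $T(3,m)$ as a sum over the partitions $(3), (2,1), (1,1,1)$ of $3$, with respective multiplicities $1, 2, 1$ matching the four terms in each conjectured formula.

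Concretely, I would first produce -- by importing the higher categorified Jones--Wenzl projectors alluded to in the abstract -- an orthogonal decomposition of the identity in the $3$-strand $\SL_N$-foam category into idempotents $P_\lambda$, one for each $\lambda \vdash 3$, with the property that the full twist $\mathrm{FT}_3 = (\sigma_1 \sigma_2)^3$ acts on the image of $P_\lambda$ by a bigraded scalar $q^{\alpha(\lambda)} t^{\beta(\lambda)}$ which, at $t=-1$, reproduces the classical eigenvalue of $\mathrm{FT}_3$ on the $\lambda$-isotypic piece of $V^{\otimes 3}$, where $V$ is the defining $\SL_N$-representation. Writing $T(3,3k+r)$ as the closure of $\mathrm{FT}_3^k \cdot (\sigma_1 \sigma_2)^r$ for $r \in \{1,2\}$, the $\lambda$-summand of the complex contributes $q^{k\alpha(\lambda)} t^{k\beta(\lambda)}$ times the Poincar\'e series of the closure of $P_\lambda \cdot (\sigma_1 \sigma_2)^r$.

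Second, I would compute the action of the residual braid $(\sigma_1 \sigma_2)^r$ on each summand. On the one-dimensional pieces attached to $(3)$ and $(1,1,1)$ the residual acts by a scalar. On the two-dimensional $(2,1)$-summand it acts in the tableau basis indexed by $\young(12,3)$ and $\young(13,2)$ with two distinct eigenvalues; these eigenvalues account exactly for the observed discrepancy between the $(3,3k+1)$- and $(3,3k+2)$-formulas, namely the extra $q^4 t^2$ attached to the $\young(13,2)$-summand (and the parallel shift on the $(1,1,1)$-summand) in the second case. Identifying $\CP_T(N)$ with the $\SL_N$-Poincar\'e series of the trace closure of $P_{\lambda(T)}$ -- computable from the Khovanov--Rozansky homology of Hopf links or from the inductive description of the projectors -- would finish the argument.

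The principal obstacle is the first step: constructing the idempotents $P_\lambda$ inside the integral $\SL_N$-foam category and pinning down both components of the bigraded full-twist eigenvalue $q^{\alpha(\lambda)} t^{\beta(\lambda)}$. The $q$-exponent is governed by the classical content statistic on $\lambda$ and is well-understood, but $\beta(\lambda)$ is genuinely homological and is at present only implicit via the Cherednik-algebra picture of \cite{GOR,GORS}. Complementary numerical support would come from checking the formulas against the data of \cite{L,foam} for $N=3$ and small $k$, and against the Jones--Rosso polynomials \cite{RJ} upon specialising $t=-1$.
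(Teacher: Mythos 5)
The statement you are proving is \thmref{conj:projectors}, which the paper itself states and leaves as a \emph{conjecture}: the authors give supporting evidence (agreement with the refined Chern--Simons invariants of \cite{AS,GN} at the HOMFLY-PT level, with Turner's $\SL_2$ computations \cite{turner}, with Heegaard--Floer homology via $d_0$, and with computer data for $N=3$ up to $T(3,83)$), but no proof. Your proposal is the natural conjectural mechanism behind the formula --- indeed the paper explicitly says the decomposition ``is expected to be related to the categorification of higher Jones-Wenzl projectors \cite{CH}, but we do not pursue this relation here'' --- so you have correctly identified the structural origin of the four summands and of the $q^{6k}t^{4k}$-type prefactors. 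However, as a proof it has genuine gaps, and they are exactly the steps you flag as ``obstacles'' and then do not resolve.

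Concretely: (1) the idempotents $P_\lambda$ for the non-rectangular shape $(2,1)$ are constructed in \cite{CH} only in the Temperley--Lieb ($\SL_2$) setting; no such orthogonal decomposition of the identity is available in the $\SL_N$-foam or Soergel-bimodule category at the level of generality your argument needs, and even for the symmetric shape only the identification of the projector's homology with the stable limit (Rozansky \cite{Rozproj}) is known. (2) The assertion that the full twist acts on the image of $P_\lambda$ by a bigraded \emph{scalar} $q^{\alpha(\lambda)}t^{\beta(\lambda)}$ is itself an open categorical statement: decategorified, the full twist has a single eigenvalue on each isotypic component, but categorically one only expects a map $P_\lambda \to q^{\alpha}t^{\beta}\,\mathrm{FT}_3\, P_\lambda$ whose cone is ``small,'' not an isomorphism of complexes; without this, the $k$-dependence of each summand is not literally a monomial prefactor. (3) Even granting a direct-sum decomposition of the complex of $\mathrm{FT}_3^k(\sigma_1\sigma_2)^r$, passing to the homology of the closure requires the relevant spectral sequences to degenerate so that Poincar\'e series add; the paper's formulae are statements about Poincar\'e series of homology, not Euler characteristics, so this degeneration cannot be waved through. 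Your specialisation $t=-1$ check against \cite{RJ} and the numerical comparison with \cite{foam} are exactly the kind of evidence the paper itself offers; they support the conjecture but do not close these gaps. In short, you have reconstructed the heuristic that motivates \thmref{conj:projectors}, not a proof of it --- which is consistent with the fact that the paper records it as a conjecture (first proposed in this form by Oblomkov and Rasmussen \cite{OR}) rather than a theorem.
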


The conjecture is motivated by the work \cite{GN}, where the ``refined Chern-Simons invariants''  \cite{AS}  of $(n,m)$\hy torus knots were expressed as sums 
over standard Young tableaux of size $n$. It was conjectured in \cite{AS} that these invariants agree with the Poincar\'e polynomials of HOMFLY-PT homology.
We show that for $n=3$ (and $n=2$) the HOMFLY-PT analogues of  $\CP_{T}(N)$ can be viewed as Poincar\'e series of certain free supercommutative algebras, and $\CP_{T}(N)$ can be obtained as homology of certain differentials $d_N$ acting on these algebras. In particular, $\CP_{\text{\scriptsize\young(123)}}(N)$
coincides with the Poincar\'e series of stable $\SL_N$\hy homology of $T(3,\infty)$ which, by \thmref{main conj}, can be described as homology of $d_N$.
This decomposition is also expected to be related to the categorification of higher Jones-Wenzl projectors \cite{CH}, but we do not pursue this relation here.

Surprisingly enough, we prove that the Poincar\'e polynomials of the Heegaard-Floer homology of $T(3,m)$ can be written using a formula with a similar structure. The corresponding differential corresponds to $d_0$ in the notations of \cite{DGR,GORS} and to $d_{1|1}$ in the notations of \cite{GGS}.

\section*{Acknowledgments}

The authors would like to thank M. Khovanov, A. Lobb and J. Rasmussen for the useful discussions and remarks.

\section{\texorpdfstring{The Koszul model for stable $\SL_N$\hy homology}{The Koszul model for stable slN homology}}

\label{sec2}

\subsection{Koszul model}

In \cite{GOR,GORS} the following algebraic model for the stable  homology of torus knots was proposed. 

\conjOne*

\begin{example}
For $n=1$ one has a single even generator $x_0$ and a single odd generator $\xi_0$, and the differential has the form $d_N(\xi_0)=x_0^N$.
The homology has dimension $N$ and is spanned by $1, x_0, \ldots, x_0^{N-1}$, so it is indeed isomorphic to the $\SL_N$\hy homology of the unknot.
\end{example}

We denote the homology of $d_N$ as $H^{\ALG}_{n}(\SL_N)$. After setting $a=q^N$ the variables are graded as follows:
$$
\deg x_k=q^{2k+2}t^{2k},\quad \deg \xi_{k}=q^{2N+2k}t^{2k+1},
$$
so that $d_N$ preserves the $q$-grading and decreases the $t$-grading by 1.

\subsection{Generators}

Consider the generating functions $x(\tau)=\sum_{k=0}^{\infty} x_k \tau^{k}$ and $\xi(\tau)=\sum_{k=0}^{\infty}\xi_k \tau^{k}.$
The equation \eqref{def d} can be rewritten as 
\begin{equation}
\label{d xi}
d_N(x(\tau))=0,\quad d_N(\xi(\tau))=x(\tau)^N \mod \tau^n.
\end{equation}
Remark that
\begin{equation}
\label{d xi dot}
d_N(\dot{\xi}(\tau))=N\dot{x}(\tau)x(\tau)^{N-1} \mod \tau^n.
\end{equation}
Define
$$
\mu(\tau)=\sum_{k=1}^{\infty}\mu_{k} \tau^{k-1}:=N\dot{x}(\tau)\xi(\tau)-x(\tau)\dot{\xi}(\tau),
$$
so $\mu_k=\sum_{i+j=k}(Ni-j)x_i\xi_j.$
It follows from \eqref{d xi} and \eqref{d xi dot} that $d_N(\mu(\tau))=0$, so 
$d_N(\mu_k)=0$ for all $k$. The degree of $\mu_k$ equals $q^{2N+2k+2}t^{2k+1}$.

\begin{conjecture}
\label{conj:generators}
The homology of $d_N$ is generated as an algebra by $x_k$ and $\mu_k$.
\end{conjecture}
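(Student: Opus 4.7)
The natural setup is to view $(\CH_n, d_N)$ as the Koszul complex over $R = \BZ[x_0, \ldots, x_{n-1}]$ for the sequence $P_k := [\tau^k] x(\tau)^N$ for $0 \leq k \leq n-1$. Then $H^0(\CH_n, d_N) = R/(P_0, \ldots, P_{n-1})$ is manifestly generated by the classes of the $x_k$, while the higher cohomologies $H^i$ measure the non-regularity of this sequence. As sanity checks, the statement is clear for $n = 1$ (no $\mu_k$ is produced, and $H^*$ reduces to $\BZ[x_0]/(x_0^N)$), and a direct calculation settles the case $n = 2$ for every $N$, confirming in particular that $H^1$ is spanned as an $H^0$-module by $\mu_1$ alone.

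The main step is to show that $H^1$ is generated as an $H^0$-module by the classes of $\mu_1, \ldots, \mu_{n-1}$. Concretely, $H^1$ equals the module of first syzygies of $(P_0, \ldots, P_{n-1})$ modulo the trivial Koszul syzygies, and the $\mu_k$ are precisely the syzygies obtained by truncating the identity $\partial_\tau(x(\tau)^N) = N \dot{x}(\tau) x(\tau)^{N-1}$ modulo $\tau^n$. I would try to show that every first syzygy reduces mod Koszul syzygies to a combination of the $\mu_k$, either via an explicit Gr\"obner basis analysis in $R$ with respect to a suitable weighted order, or inductively on $n$, exploiting the inclusion $\CH_{n-1} \hookrightarrow \CH_n$ obtained by setting $x_{n-1} = \xi_{n-1} = 0$ together with the long exact sequence arising from adjoining these two generators back. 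For $i \geq 2$, the remaining task is to establish surjectivity of the cup product $H^1 \otimes_{H^0} H^{i-1} \to H^i$, which would inductively yield that $H^i$ is spanned by $i$-fold products $\mu_{k_1} \cdots \mu_{k_i}$ and so complete the argument.

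The hard part is controlling $H^1$ itself. Because $(P_0, \ldots, P_{n-1})$ is highly non-regular, and because the integral Koszul model carries $p$-torsion for primes $p > N+1$ as discussed later in Section~\ref{sec2}, one cannot rule out a priori that the integral syzygy module contains extra torsion classes not captured by the $\mu_k$. A reasonable fallback would be to first prove the conjecture over $\BQ$, where the derivation $\partial_\tau$ and the natural scaling action on the $x_k$ give representation-theoretic leverage on the syzygies, and only afterwards to analyse integral torsion, possibly via the $\BZ_N$-coefficient reformulation advertised in the introduction.
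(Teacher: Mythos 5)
The first thing to note is that the paper contains no proof of this statement: it is \thmref{conj:generators}, stated and left open, and the authors later even invoke it as a hypothesis (e.g.\ to argue that the class $A$ in \thmref{sec2} cannot be lifted to an integral homology class). So there is no proof of the authors' to compare yours against; the only cases actually verified in the paper are $n\le 3$, where \thmref{n=3} confirms rationally that $H^{\ALG}_3$ is generated by $x_0,x_1,x_2,\mu_1,\mu_2$, one step beyond your sanity checks at $n\le 2$.

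As a proof attempt, your proposal is a correct reformulation but not a proof. The framing is right: $H^0=R/(P_0,\dots,P_{n-1})$ is visibly generated by the $x_k$, $H^1$ is the module of first syzygies modulo Koszul syzygies, and the $\mu_k$ are the syzygies extracted from $\partial_\tau\bigl(x(\tau)^N\bigr)=N\dot x(\tau)x(\tau)^{N-1}$, exactly as in the paper's equations \eqref{d xi} and \eqref{d xi dot}. But the two substantive claims --- (i) that the $\mu_k$ exhaust $H^1$ as an $H^0$-module, and (ii) that the product maps $H^1\otimes_{H^0}H^{i-1}\to H^i$ are surjective --- are the entire content of the conjecture, and for both you only say what you ``would try.'' Neither follows from general Koszul formalism: for a non-regular sequence the multiplication on Koszul homology is typically far from surjective, and your sketch uses nothing about the specific $P_k=[\tau^k]x(\tau)^N$ beyond the one derivative identity that produces the $\mu_k$ in the first place. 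The proposed induction on $n$ via setting $x_{n-1}=\xi_{n-1}=0$ is also not set up: that specialization is a quotient of $\CH_n$, not an inclusion $\CH_{n-1}\hookrightarrow\CH_n$, so the long exact sequence you want has to be built more carefully. Finally, the integral version interacts delicately with the torsion of \thmref{thm:torsion} and the classes $A$ and $B$ (note $B\equiv x_2\mu_5$ only mod $5$), so the fallback of working over $\BQ$ changes the statement rather than proving it. In short, you have translated the conjecture into the standard language of syzygies and correctly identified the two lemmas one would need, but supplied an argument for neither --- and neither does the paper.
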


\subsection{Reduced homology}

The reduced homology is defined as a quotient of the unreduced homology by the homology of the unknot, i.e.
by the equation $x_0=0$.
It turns out that the reduced and unreduced Koszul homology are tightly related.

\begin{theorem}
We have the following isomorphism:
$$
H^{\ALG,\RED}_{n}(\SL_N)\simeq \BZ[\xi_1,\ldots,\xi_{N-1},x_{n-N+1},\ldots,x_n]\otimes H^{\ALG}_{n-N}(\SL_N).
$$
\end{theorem}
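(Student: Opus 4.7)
The plan is to exhibit the reduced DGA $\overline{\CH}_n := \CH_n/(x_0)$ as a tensor product of sub-DGAs, only one of which (isomorphic to $\CH_{n-N}$) carries a nontrivial differential, and then to invoke K\"unneth.

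First I would observe that after setting $x_0 = 0$ the generators $\xi_0, \ldots, \xi_{N-1}$ become cycles: in $d_N(\xi_k) = \sum_{i_1+\cdots+i_N=k} x_{i_1}\cdots x_{i_N}$, only monomials with every $i_j \geq 1$ survive the quotient, which forces $k \geq N$; and since $d_N$ sends odd elements to even ones, these cycles are also non-boundaries. I would then reindex by setting $y_j := x_{j+1}$ for $0 \leq j \leq n-2$ and $\eta_l := \xi_{l+N}$ for $0 \leq l \leq n-N-1$. A direct computation gives
$$d_N(\eta_l) = \sum_{\substack{i_1+\cdots+i_N = l+N \\ i_j \geq 1}} x_{i_1}\cdots x_{i_N} = \sum_{j_1 + \cdots + j_N = l} y_{j_1} \cdots y_{j_N},$$
which is exactly the defining relation of $\CH_{n-N}$ on generators $y_0, \ldots, y_{n-N-1}$ and $\eta_0, \ldots, \eta_{n-N-1}$. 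Since each $i_j$ appearing on the right satisfies $i_j \leq n-N$, this sub-DGA is closed under $d_N$ and genuinely isomorphic to $\CH_{n-N}$.

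The remaining generators — namely $y_{n-N}, \ldots, y_{n-2}$ (i.e.\ $x_{n-N+1}, \ldots, x_{n-1}$) together with $\xi_0, \ldots, \xi_{N-1}$ — are all cycles and never appear on the right-hand side of any differential of an element of the identified $\CH_{n-N}$-subalgebra. This gives an isomorphism of DGAs
$$\overline{\CH}_n \cong \CH_{n-N} \otimes \BZ[x_{n-N+1}, \ldots, x_{n-1}, \xi_0, \ldots, \xi_{N-1}],$$
with the differential concentrated in the first factor. Since the second factor is a free $\BZ$-module in each graded piece and carries zero differential, K\"unneth applies without Tor contributions, giving the claimed decomposition up to the (notational) matching of generator labels.

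The calculation is essentially formal; the main points requiring care are the supercommutative sign conventions when splitting the algebra as a tensor product of DGAs, and the bookkeeping of indices to match the statement's notation. I do not anticipate any substantive obstacle.
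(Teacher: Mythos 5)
Your argument is exactly the paper's proof, which consists of the same observations (setting $x_0=0$ kills $d_N(\xi_i)$ for $i<N$, and the induced differential on $\xi_i$ for $i\ge N$ reproduces that of $\CH_{n-N}$ after the shift $x_j\mapsto x_{j-1}$), just written out in full with the re-indexing and the K\"unneth step made explicit. Note only that the free factor you obtain, on $\xi_0,\dots,\xi_{N-1}$ and $x_{n-N+1},\dots,x_{n-1}$, does not literally match the generators $\xi_1,\dots,\xi_{N-1},x_{n-N+1},\dots,x_n$ listed in the statement (the parities of the generator counts differ, so this is more than notation); this appears to be an indexing slip in the theorem as printed rather than a gap in your argument, which correctly follows the paper's own definition of the reduction as imposing $x_0=0$.
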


\begin{proof}
In reduced homology we set $x_0=0$, so $d_N(\xi_i)=0$ for $i<N$.
Furthermore, $d_N^{\RED}(\xi_i)$ coincides with $d_N(\xi_{i-N})$ where all 
$x_i$ are replaced by $x_{i-1}$. For example, $d_{N}^{\RED}(\xi_N)=x_1^{N}$.
\end{proof}

 Note that the isomorphism does not preserve $q$\hy\ and $t$\hy gradings,
 but it is easy to reconstruct the change of gradings transforming $x_i$ to $x_{i+1}$
 and $\xi_i$ to $\xi_{i+N}$. 

\subsection{$\BZ_N$\hy homology for prime $N$}

In this subsection we give a simple description of $\SL_N$\hy homology with $\BZ_N$\hy coefficients, 
assuming for simplicity that $N$ is a prime number.

\begin{theorem}\label{thm:ZNZcoefficients}
Suppose that $N$ is prime. Then the algebraic Poincar\'e series has the form:
$$\CP^{\ALG}_n(\SL_N,\BZ_N)=\prod_{k=0}^{n-1}\frac{1+q^{2N+2k}t^{2k+1}}{1-q^{2k+2}t^{2k}}
\prod_{i=0}^{\lfloor\frac{n-1}{N}\rfloor}\frac{1-q^{2N+2iN}t^{2iN}}{1+q^{2N+2iN}t^{2iN+1}}.$$
\end{theorem}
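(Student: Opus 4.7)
The plan is to exploit Frobenius in characteristic $p = N$. Since $N$ is prime, all nontrivial multinomial coefficients appearing in $(\sum_k x_k \tau^k)^N$ vanish mod $N$, so $x(\tau)^N \equiv \sum_k x_k^N \tau^{Nk} \pmod{N}$. Comparing coefficients of $\tau^k$ in \eqref{d xi} for $k < n$ shows that, writing $m = \lfloor (n-1)/N \rfloor$, the differential collapses to
$$d_N(\xi_{jN}) = x_j^N \text{ for } j = 0, 1, \ldots, m, \qquad d_N(\xi_k) = 0 \text{ otherwise.}$$

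The key structural observation is that each variable $x_j$ with $j \le m$ occurs in the image of exactly one odd generator, namely $\xi_{jN}$. Combined with the Leibniz rule, this yields a decomposition of differential graded $\BZ_N$-algebras
$$(\CH_n \otimes \BZ_N,\, d_N) \;\cong\; \bigotimes_{j=0}^{m} \bigl(\BZ_N[x_j] \otimes \Lambda[\xi_{jN}]\bigr) \;\otimes\; \bigotimes_{k=m+1}^{n-1} \BZ_N[x_k] \;\otimes\; \bigotimes_{k \in B} \Lambda[\xi_k],$$
where $B = \{0, 1, \ldots, n-1\} \setminus \{0, N, \ldots, mN\}$, the last two factors carry the zero differential, and each small factor of the first product is equipped with $d_N(\xi_{jN}) = x_j^N$.

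By K\"unneth, one just needs the homology of each small factor $\BZ_N[x_j] \otimes \Lambda[\xi_{jN}]$. There the map $f(x_j)\xi_{jN} \mapsto f(x_j) x_j^N$ is injective, so the homology is $\BZ_N[x_j]/(x_j^N)$, concentrated in $\xi$-degree $0$, with Poincar\'e series $(1 - (\deg x_j)^N)/(1 - \deg x_j)$. Multiplying across all factors and rewriting the product against the Poincar\'e series of the full free algebra $\prod_{k=0}^{n-1} (1+\deg \xi_k)/(1-\deg x_k)$, one arrives at
$$\CP^{\ALG}_n(\SL_N,\BZ_N) \;=\; \prod_{k=0}^{n-1} \frac{1+\deg \xi_k}{1-\deg x_k} \;\cdot\; \prod_{j=0}^{m} \frac{1 - (\deg x_j)^N}{1 + \deg \xi_{jN}},$$
which reproduces the claimed formula after substituting $\deg x_j = q^{2j+2}t^{2j}$, $(\deg x_j)^N = q^{2N+2jN}t^{2jN}$ and $\deg \xi_{jN} = q^{2N+2jN}t^{2jN+1}$.

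The only real obstacle is the Frobenius simplification of $d_N$; once that is in hand, the tensor-product factorization is immediate from the Leibniz rule and the rest is just bookkeeping of gradings.
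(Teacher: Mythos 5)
Your proof is correct and follows essentially the same route as the paper: both use the Frobenius identity $x(\tau)^N\equiv\sum_i x_i^N\tau^{iN}\pmod N$ to collapse the differential to $d_N(\xi_{iN})=x_i^N$ and $d_N(\xi_k)=0$ otherwise, and then read off the homology as $\bigotimes_i\BZ_N[x_i]/(x_i^N)$ tensored with the free (super)commutative algebra on the remaining generators. The only difference is that you spell out the tensor-factorization and K\"unneth step that the paper leaves implicit.
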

\begin{proof}
If $N$ is prime, then
$$d_N(\xi(\tau))=x(\tau)^N\equiv\sum x_i^{N}\tau^{iN}\mod N.$$
Therefore 
$$d_N(\xi_k)\equiv \begin{cases}
0\mod N,&  k\neq Ni\\
x_i^{N}\mod N,& k=Ni.\\
\end{cases}$$
The homology is generated by $x_i$ modulo relations $x_i^N=0$ and by $\xi_k$ for $k$ not divisible by $N$.
\end{proof}
See \thmref{table2} for an illustration of the theorem.

\subsection{Torsion}
\begin{table}[p]
\rotatebox{90}{\parbox{\textheight}{\centering
    \newcommand{\RH}[1]{\makebox[0.86em]{#1}}
    \newcommand{\FT}{\raisebox{-0.8ex}{\makebox[0pt]{\hspace{-1em}\textbf{\textcolor{green}{1}}}}}
\newcommand{\FTT}{\raisebox{-0.8ex}{\makebox[0pt]{\hspace{-1em}\textbf{\textcolor{green}{2}}}}}
\newcommand{\HH}[1]{\makebox[0pt]{#1}}
\extrarowsep=1.15mm
\begin{tabu}{|[lightgray]c|*{15}{c|[lightgray]}c||*{15}{c|[lightgray]}}
\tabucline[lightgray]{1-31}
\backslashbox{$\ddagger$\kern-0.1em}{\kern-0.1em$t$} & \RH{0} & \RH{1} & \RH{2} & \RH{3} & \RH{4} & \RH{5} & \RH{6} & \RH{7} & \RH{8} & \RH{9}  & \RH{10} & \RH{11} & \RH{12} & \RH{13} & \RH{14} & \RH{15} & \RH{16} & \RH{17} & \RH{18} & \RH{19} & \RH{20} & \RH{21} & \RH{22} & \RH{23} & \RH{24}  & \RH{25} & \RH{26} & \RH{27} & \RH{28} & \RH{29} \\\hline
$6  $  &   &   &   & 1 &   & 1 &   & 1 & 1 &   & 1 &   & 1 &    &   &    &   &    &   &    &    &    &   &   &   &   &   &   &   & \\\tabucline[lightgray]{1-31}
$4  $  & 1 &   & 1 & 1 & 1 & 2 &   & 3 & 1 & 2 & 1 & 1 & 2 &    & 1 &    & 1 &    &   &    &    &    &   &   &   &   &   &   &   & \\\tabucline[lightgray]{1-31}
$2  $  & 1 &   & 1 & 1 & 2 & 2 & 1 & 3 & 2 & 4 & 1 & 4 & 3 & 3  & 3 & 1  & 4 & 1  & 2 & 1  & 1  & 1  &   &   &   &   &   &   &   & \\\tabucline[lightgray]{1-31}
$0  $  & 1 &   & 1 &   & 2 & 1 & 2 & 2 & 3 & 5 & 2 & 7 & 3 & 8  & 4 & 6  & 7 & 4  & 6 & 2  & 5  & 2  & 2 & 1 & 1 & 1 &   &   &   & \\\tabucline[lightgray]{1-31}
$-2 $  &   &   &   &   & 1 &   & 2 &   & 4 & 3 & 4 & 6 & 5 & 10 & 5 & 11 & 7 & 11 & 8 & 8  & 9  & 5  & 7 & 3 & 5 & 2 & 2 & 1 & 1 & \\\tabucline[lightgray]{1-31}
$-4 $  &   &   &   &   &   &   & 1 &   & 3 & 1 & 4 & 3 & 6 & 7  & 6 & 11 & 8 & 14 & 8 & 13 & 10 & 11 & 9 & 8 & 8 & 5 & 4 & 2 & 2 & 1\\\tabucline[lightgray]{1-31}
$-6 $  &   &   &   &   &   &   &   &   & 1 &   & 2 &   & 5 & 2  & 6 & 6  & 7 & 11 & 7 & 13 & 9  & 13 & 9 & 11& 8 & 7 & 5 & 4 & 3 & 1\\\tabucline[lightgray]{1-31}
$-8 $  &   &   &   &   &   &   &   &   &   &   &   &   & 2 &    & 4 & 1  & 6 & 4  & 6 & 7  & 7  & 10 & 6 & 9 & 6 & 7 & 4 & 3 & 2 & 1\\\tabucline[lightgray]{1-31}
$-10$  &   &   &   &   &   &   &   &   &   &   &   &   &   &    & 1 &    & 2 &    & 4 & 1  & 5  & 3  & 4 & 4 & 3 & 4 & 2 & 2 & 1 & \\\tabucline[lightgray]{1-31}
$-12$  &   &   &   &   &   &   &   &   &   &   &   &   &   &    &   &    &   &    & 1 &    & 2  &    & 2 &   & 2 & 1 & 1 &   &   & \\\tabucline[lightgray]{1-31}
\end{tabu}\\[\baselineskip]
\caption{The unreduced $\SL_3$\hy homology of $T(5,9)$ with $\BZ_3$\hy coefficients, cf. \thmref{thm:ZNZcoefficients}.
The top row indicates the homological degree $t$, and the left-most column $\ddagger = q - 2t$, where $q$ is the quantum degree.
For simplicity, we follow the grading convention (which is different from the Khovanov and Rozansky's original one)
that leads to e.g. the reduced $\SL_3$\hy homology of the trefoil with positive crossings
having Poincar\'e polynomial $q^4 + t^2q^8 + t^3q^{12}$. 
Note also that the stable part of an actual $(n,m)$-torus knot is shifted in the quantum degree by $2(nm - n - m)$ relative to the homology of the $(n,\infty)$-torus knot.
Each cell shows the dimension of homology at the corresponding degree.
The first divergence from the stable model (at homological degree 16) is indicated by a doubled vertical line.
}\label{table2}}}
\end{table}
\begin{table}[p]
\rotatebox{90}{\parbox{\textheight}{\centering
\newcommand{\RH}[1]{\makebox[0.86em]{#1}}
\newcommand{\FT}{\raisebox{-0.8ex}{\makebox[0pt]{\hspace{-1em}\textbf{\textcolor{green}{1}}}}}
\newcommand{\FBT}{\raisebox{-0.625ex}{\makebox[0pt]{\fbox{\rule{0pt}{1.025em}\hspace{1em}}}}\raisebox{-0.8ex}{\makebox[0pt]{\hspace{-1em}\textbf{\textcolor{green}{1}}}}}
\newcommand{\FTT}{\raisebox{-0.8ex}{\makebox[0pt]{\hspace{-1em}\textbf{\textcolor{green}{2}}}}}
\newcommand{\HH}[1]{\makebox[0pt]{#1}}
\extrarowsep=1.15mm
\begin{tabu}{|[lightgray]c|*{15}{c|[lightgray]}c||*{14}{c|[lightgray]}}
\tabucline[lightgray]{1-30}
\backslashbox{$\ddagger$\kern-0.1em}{\kern-0.1em$t$} & \RH{0} & \RH{1} & \RH{2} & \RH{3} & \RH{4} & \RH{5} & \RH{6} & \RH{7} & \RH{8} & \RH{9}  & \RH{10} & \RH{11} & \RH{12} & \RH{13} & \RH{14} & \RH{15} & \RH{16} & \RH{17} & \RH{18} & \RH{19} & \RH{20} & \RH{21} & \RH{22} & \RH{23} & \RH{24}  & \RH{25} & \RH{26} & \RH{27} & \RH{28} \\\hline
$6$ & & & & \HH{1} & & \HH{1} & & & \HH{1} & & \HH{1} & & \HH{1} & & & & & & & & & & & & & & & & \\\tabucline[lightgray]{1-30}
$4$ & \HH{1} & & & \HH{1} & & \HH{2} & & \HH{2} & & \HH{1} & \HH{1} & & \HH{2} & & \HH{1} & & & & & & & & & & & & & & \\\tabucline[lightgray]{1-30}
$2$ & \HH{1} & & \HH{1} & & \HH{1} & \HH{1} & & \HH{3} & & \HH{4} & & \HH{2} & \HH{2} & & \HH{3} & & \HH{3} & & \HH{1} & \HH{1} & & \HH{1} & & & & & & & \\\tabucline[lightgray]{1-30}
$0$ & \HH{1} & & \HH{1} & & \HH{2} & & \HH{2} & \HH{1} & \HH{1} & \HH{4} & & \HH{6} & & \HH{5} &\FT\HH{1} & \HH{1} & \HH{5} & & \HH{5} & & \HH{3} & \HH{1} & & \HH{1} & & \HH{1} & & & \\\tabucline[lightgray]{1-30}
$-2$ & & & & & \HH{1} & & \HH{2} & & \HH{3} & \HH{1} & \HH{1} & \HH{4} & & \HH{8} & & \HH{8} &\FT\HH{1} & \HH{4} &\FT\HH{5} & & \HH{7} & & \HH{5} & \FT & \HH{3} & \HH{1} & \HH{1} & \HH{1} & \\\tabucline[lightgray]{1-30}
$-4$ & & & & & & & \HH{1} & & \HH{3} & & \HH{4} & \FBT & \HH{4} &\FT\HH{3} & \HH{1} & \HH{9} & & \HH{11}& \FT & \HH{8} &\FTT\HH{2}& \HH{3} &\FT\HH{5} & & \HH{6} & & \HH{3} & & \HH{1}\\\tabucline[lightgray]{1-30}
$-6$ & & & & & & & & & \HH{1} & & \HH{2} & & \HH{5} & & \HH{5} &\FT\HH{2} & \HH{3} &\FT\HH{6} & & \HH{10}& & \HH{10}& \FTT& \HH{7} &\FTT\HH{1}& \HH{3} &\FT\HH{2} & \HH{1} & \HH{1}\\\tabucline[lightgray]{1-30}
$-8$ & & & & & & & & & & & & & \HH{2} & & \HH{4} & & \HH{6} &\FT & \HH{5} &\FTT\HH{2}& \HH{3} &\FT\HH{6} & & \HH{7} & & \HH{5} & & \HH{2} & \\\tabucline[lightgray]{1-30}
$-10$ & & & & & & & & & & & & & & & \HH{1} & & \HH{2} & & \HH{4} & & \HH{5} & \FT & \HH{4} & \FTT& \HH{2} &\FT\HH{1} & \HH{1} & \HH{1} & \\\tabucline[lightgray]{1-30}
$-12$ & & & & & & & & & & & & & & & & & & & \HH{1} & & \HH{2} & & \HH{2} & & \HH{2} & & \HH{1} & & \\\tabucline[lightgray]{1-30}
\end{tabu}\\[\baselineskip]
\caption{The unreduced $\SL_3$-homology of $T(5,9)$. See the caption of \thmref{table2} for detailed explanations. 
The $\BZ_5$\hy torsion is printed at the lower left corner of each cell in bold and green; for the sake of legibility, $\BZ_3$\hy torsion is not printed, but cf. \thmref{table2}.
The only other torsion is $\BZ_2$\hy torsion $(t^{23}q^{36} + t^{24}q^{42})$, which is not printed either, and may be expected to be unstable.
Note the presence of $\BZ_5$\hy torsion at $t^{11} q^{18}$ (indicated by a box), in accordance with \thmref{thm:torsion}.}
\label{table1}}}
\end{table}
The computations of Khovanov homology showed the presence of large and complicated torsion. The first computations in $\SL_3$\hy knot homology indeed confirmed the existence of torsion as well. It has been shown in \cite{GOR} that the algebraic model is compatible at least with some part of this torsion, the following result generalizes this check to $\SL_N$\hy homology.

\begin{theorem}\label{thm:torsion}
Let $p>N+1$ be a prime number, then the algebraic homology $H^{\ALG}_{p}(\SL_N,\BZ)$ has $p$-torsion in bidegree $q^{2p+2N+2}t^{2p+1}$.
\end{theorem}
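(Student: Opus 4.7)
The plan is to exhibit an element $\mu_p$ whose $d_N$-image is $p$ times an integrally non-exact cycle $z$; the homology class $[z]$ then realises the desired $p$-torsion. Specifically, I would take
\[
\mu_p = \sum_{i+j=p,\ 1\le i,j\le p-1} (Ni - j)\, x_i\, \xi_j,
\]
the natural extension to $k = p$ of the formula defining $\mu_k$ for $k < n$. Following the generating-function computation in the preceding subsection but keeping track of the ``defects'' arising because $d_N(\xi(\tau)) \equiv x(\tau)^N$ and $d_N(\dot\xi(\tau)) \equiv N\dot x(\tau) x(\tau)^{N-1}$ hold only modulo $\tau^p$ and $\tau^{p-1}$ respectively, and using the polynomial identity $N\dot x(\tau) x(\tau)^N = x(\tau)\bigl(x(\tau)^N\bigr)'$ to cancel the principal terms, I read off the coefficient of $\tau^{p-1}$ in $d_N(\mu(\tau))$ to find
\[
d_N(\mu_p) = p\cdot x_0\, \tilde s_p, \qquad\text{where}\qquad \tilde s_p := [\tau^p]\,x(\tau)^N \in \BZ[x_0,\ldots,x_{p-1}].
\]
Setting $z = x_0 \tilde s_p$, this already gives $p\,[z] = 0$.

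It remains to verify $[z] \ne 0$, i.e., that $z$ is not a boundary in $\CH_p$. Since $z$ is a pure polynomial and $d_N$ lowers $\xi$-count by one, any preimage must be a sum of monomials with a single $\xi$; the $t$- and $q$-gradings then force this preimage to take the form $\sum_{j=1}^{p-1} c_j\, x_{p-j}\,\xi_j$. Setting $c'_i := c_{p-i}$, the problem reduces to asking whether $z = \sum_{i=1}^{p-1} c'_i\, x_i\, s_{p-i}$ admits an integer solution. Comparing coefficients of a monomial $\prod_k x_k^{n_k}$ (with $\sum n_k = N+1$, $\sum k n_k = p$) and applying the identity $\binom{N}{\ldots, n_i-1, \ldots} = \tfrac{n_i}{N+1}\binom{N+1}{n_0,\ldots,n_{p-1}}$ transforms the system into the single affine condition
\[
\sum_{i\ge 1}(c'_i + 1)\, n_i = N+1
\]
for all admissible tuples $(n_i)$. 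Since such tuples exist with $\sum_{i\ge 1} n_i$ taking several distinct values in $\{2,\ldots,N+1\}$ (assuming $N \ge 2$), the coefficient vector $(c'_i + 1)_i$ must be proportional to $(i)_i$; the condition then forces $c'_i = -1 + (N+1)i/p$, which has no integer solution since $p > N+1$ is prime and hence coprime to both $N+1$ and each $i \in \{1,\ldots,p-1\}$.

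The principal difficulty is this linear-algebraic step; the hypothesis $p > N+1$ is used sharply, as at $p = N+1$ the values $c'_i = i - 1$ are integral and $z$ is in fact a boundary. Once $[z] \ne 0$ is established, $[z]$ generates a $\BZ_p$-summand in $H^{\ALG}_p(\SL_N,\BZ)$; its Bockstein preimage $\mu_p$ occupies the claimed bidegree $q^{2p+2N+2}t^{2p+1}$, which is how this torsion is recorded in the tables.
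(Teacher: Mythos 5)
Your element $\mu_p$ is exactly the class $t_p$ of the paper's proof, and your formula $d_N(t_p)=p\,x_0\tilde s_p$ (with $\tilde s_p$ the truncated coefficient $[\tau^p]x(\tau)^N$) agrees with the paper's $d_N(t_p)=p\bigl(x_0d_N(\xi_p)-Nx_px_0^N\bigr)$, so the first half of your argument coincides with the published one. You diverge at the non-degeneracy step. The paper argues by rank-counting: $t_p$ is a nonzero cycle mod $p$, hence the kernel of $d_N\otimes\BZ_p$ in bidegree $q^{2p+2N+2}t^{2p+1}$ exceeds the rank of the integral kernel, hence $p$-torsion. As written this presupposes that $\bar t_p$ is not the reduction of an integral cycle — equivalently, that $z=x_0\tilde s_p$ is not an integral boundary — and that is precisely what you prove directly: the gradings force any preimage to lie in $\bigoplus_{i=1}^{p-1}\BZ\, x_i\xi_{p-i}$, the identity $\binom{N}{\dots,n_i-1,\dots}=\tfrac{n_i}{N+1}\binom{N+1}{n_0,\dots,n_{p-1}}$ converts the coefficient equations into $\sum_{i\ge1}(c'_i+1)n_i=N+1$ over all admissible tuples, and the forced solution $c'_i=(N+1)i/p-1$ is non-integral since the prime $p>N+1$ divides neither $N+1$ nor any $i\le p-1$. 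Your route is more explicit and self-contained: it exhibits the torsion class itself (in $t$-degree $2p$, recorded at $t^{2p+1}$ exactly as in the paper's tables) and makes the role of the hypothesis $p>N+1$ transparent, at the cost of the extra linear algebra.

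One step of that linear algebra needs tightening. The existence of tuples with several distinct values of $\sum_{i\ge1}n_i$ does not by itself force $(c'_i+1)_i$ to be proportional to $(i)_i$; you need the admissible tuples to affinely span the hyperplane $\sum_{i\ge1}in_i=p$. This does hold for $N\ge2$: writing $b_i=c'_i+1$, the pairs $e_j+e_{p-j}$ give $b_j+b_{p-j}=N+1$, and the triples $e_1+e_j+e_{p-1-j}$ (admissible because $N+1\ge3$) give $b_1+b_j-b_{j+1}=0$, so $b_j=jb_1$ for all $j$ and then $pb_1=N+1$. (For $N=1$ only the pairs are available and the uniqueness fails, but $\SL_1$-homology is trivial, so nothing is lost.) With that supplement your proof is complete and correct.
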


\begin{proof}
Recall that for $\mu_p=\sum_{i=0}^{p}(Ni-p+i)x_i\xi_{p-i}$ one has $d_N(\mu_p)=0$.
This generator is not present in $\CH_p$, since $x_p$ and $\xi_p$ are not present in the Koszul complex.
Consider the element 
$$t_p:=\sum_{i=1}^{p-1}(Ni-p+i)x_i\xi_{p-i}=\mu_p-Npx_p\xi_0+px_0\xi_p,$$
which is present in $\CH_p$.
We have
$$d_N(t_p)=p(x_0d_N(\xi_p)-Nx_px_0^N),$$
so $d_N(t_p)$ vanishes modulo $p$. Since $t_p$ has a term $(N-p+1)x_1\xi_{p-1}$ that does not vanish modulo $p$,
the dimension of the kernel of $d_N$ over $\BZ_p$ is bigger than its rank over $\BZ$.
\end{proof}

\begin{example}
For $N=3$ and $p=5$ the computations of \cite{foam} show $\BZ_5$\hy torsion in 
bidegree $q^{18}t^{11}$ of the homology of the $(5,m)$\hy torus knot for $6 \leq m \leq 14$.
It is plausible that it survives in the stable limit and hence agrees with the torsion generator $t_5$.
See \thmref{table1} for the case $m = 9$.
\end{example}

Indeed, the torsion classes described in \thmref{thm:torsion} (and their products with $x$'s and $\mu$'s) do not cover all stable torsion.

\begin{example}
Consider the following class in $\CH_n$ (defined for $n\ge 5$):
$$
A:=4x_0\xi_1\xi_4-8x_1\xi_0\xi_4+8x_4\xi_0\xi_1-2x_0\xi_2\xi_3-2x_2\xi_1\xi_2-4x_3\xi_0\xi_2+x_1\xi_1\xi_3+4x_2\xi_0\xi_3.
$$
One can check that $d_2(A)=10x_1x_3(2x_1\xi_0-x_0\xi_1)$, so $A$ represents a class in $H^{\ALG}_{n}(\SL_2,\BZ_5)$.
On the other hand, one can check (say, from \thmref{conj:generators}) that $A$ cannot be lifted to a class in $H^{\ALG}_{n}(\SL_2,\BZ)$,
hence $H^{\ALG}_{n}(\SL_2,\BZ)$ has 5-torsion in bidegree $q^{20}t^{12}$ for all $n\ge 5$. This torsion can be seen in the  data of \cite{katlas},
      e.g. for the $(5,6)$\hy, $(6,7)$\hy, $(7,8)$\hy\ and $(8,9)$\hy\  torus knots.
\end{example}

\begin{example}
\label{ex:7 torsion}
Consider the following class in $\CH_6$: 
$$
B:=5x_1^2\xi_5-5x_1x_5\xi_1-10x_1x_4\xi_2+16x_2x_4\xi_1-15x_1x_3\xi_3+15x_3^2\xi_1+3x_2^2\xi_3-3x_2x_3\xi_2-6x_1x_2\xi_4.
$$
One can check that $d_3(B)=-105x_0x_1^2x_2x_3$, so $B$ represents a class in $H^{\ALG}_{6}(\SL_3,\BZ_7)$.
Moreover, $B$ cannot be lifted to a class in $H^{\ALG}_{6}(\SL_3,\BZ)$, so $H^{\ALG}_{6}(\SL_3,\BZ)$ has 7-torsion in bidegree $q^{24}t^{15}$.
This is also confirmed by the experimental data \cite{foam}, i.e. by the computed homology of the $(6,7)$\hy torus knot and $(6,8)$\hy torus link.
Note that $B\equiv x_2\mu_5\mod 5$, so it does not contribute to the $5$-torsion. 
\end{example}

\begin{remark}
Przytycki and Sazdanovi\'c conjectured in \cite[Conjecture 6.1]{PS} that the closure of an $n$-strand braid cannot have $\BZ_p$-torsion in Khovanov homology
for a prime $p > n$. Example \ref{ex:7 torsion} shows that a naive generalization of this conjecture
(``the closure of an $n$-strand braid cannot have $\BZ_p$-torsion in $\SL_N$\hy homology for a prime $p > \max(n,N)$'')
does not hold for $\SL_3$\hy homology, since the $(6,7)$\hy torus knot and the $(6,8)$\hy torus link already have 7-torsion.
\end{remark}

\subsection{$(2,\infty)$ case}

For $n=2$ we have two even generators $x_0, x_1$ and two odd generators $\xi_0,\xi_1$, with the differential
$$
d_N(\xi_0)=x_0^N,\quad d_N(\xi_1)=Nx_0^{N-1}x_1.
$$
There is a generator $\mu_1=Nx_1\xi_0-x_0\xi_1$ of degree $q^{2N+4}t^{3}$, the homology is generated by $x_0,x_1$ and $\mu_1$ modulo relations
$$
x_0^{N}=0,\quad Nx_0^{N-1}x_1=0,\quad x_0^{N-1}\mu_{1}=0.
$$
The monomial basis in $\mathbb{Q}$-homology has the form
$$x_0^{N-1},\ x_0^{i}x_1^{a}\mu_1^{b}, \ i< N-1, b\le 1,$$
so the Poincar\'e series has the form
\begin{align}
\label{n=2}
\CP_2(q,t)& =  q^{2N-2}+\frac{(1-q^{2N-2})(1+q^{2N+4}t^3)}{(1-q^2)(1-q^4t^2)}  \\[1ex]
          &  = \frac{1-q^{2N}-q^{2N+2}t^2+q^{2N+4}t^2+q^{2N+4}t^3-q^{4N+2}t^3}{(1-q^2)(1-q^4t^2)}.\notag
\end{align}

The actual $\SL_N$\hy Khovanov-Rozansky homology was computed for all $N$ by Cautis in \cite[Corollary 10.2]{C}.
Since the two answers coincide, we obtain the following result.

\begin{theorem}
\Thmref{main conj} is true for $n=2$ and all $N$.
\end{theorem}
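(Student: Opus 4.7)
The plan is a direct comparison of Poincar\'e series: the left-hand side $\CP_2(q,t)$ is already computed in \eqref{n=2} from the explicit monomial basis of $H^{\ALG}_2(\SL_N)$, so it remains to match this against Cautis's explicit formula for the stable $\SL_N$\hy homology of $T(2,\infty)$ from \cite[Corollary 10.2]{C}, and then to upgrade the equality of Poincar\'e series to an isomorphism of bigraded groups.

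First, I would double-check the computation of $H^{\ALG}_2(\SL_N)$ over $\BZ$. The complex $\CH_2$ has four generators, and the differential sends $\xi_0 \mapsto x_0^N$ and $\xi_1 \mapsto Nx_0^{N-1}x_1$. The class $\mu_1 = Nx_1\xi_0 - x_0\xi_1$ is manifestly a cycle in bidegree $q^{2N+4}t^3$. One verifies by direct inspection (for instance, by computing $d_N$ on each monomial in the $\BZ$-basis of $\CH_2$) that the homology is generated by $x_0,x_1,\mu_1$ subject only to the relations $x_0^N=0$, $Nx_0^{N-1}x_1=0$, and $x_0^{N-1}\mu_1=0$. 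Since $x_0,x_1$ are even and $\mu_1$ is odd, the resulting monomial $\BZ$-basis is exactly the one written above \eqref{n=2}, and in particular $H^{\ALG}_2(\SL_N)$ is torsion-free.

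Second, I would transcribe Cautis's formula into the normalization used here. After performing the substitution $a = q^N$, adjusting for Cautis's grading conventions, and accounting for the stable shift relative to finite torus knots, his answer should take the form of a rational function with denominator $(1-q^2)(1-q^4t^2)$. The numerator should then match the one displayed in \eqref{n=2}, which is a matter of algebraic rearrangement rather than deep computation.

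The main obstacle is purely bookkeeping: reconciling the two sets of grading conventions. Cautis uses a normalization of $\SL_N$-Khovanov-Rozansky homology that differs from the one adopted here (as explained in the caption of \thmref{table2}), so one must carefully track the overall shifts in $q$ and $t$ under the stabilization $m \to \infty$ and under the $a = q^N$ substitution. Once the conventions are aligned and the two Poincar\'e series are seen to agree, the isomorphism claimed in \thmref{main conj} follows immediately, since both sides are torsion-free and their graded ranks coincide in every bidegree.
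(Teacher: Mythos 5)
Your proposal follows exactly the route of the paper: the Koszul side is the monomial-basis computation culminating in \eqref{n=2}, and the theorem is then obtained by matching this against Cautis's formula in \cite[Corollary 10.2]{C}, the only real work being the reconciliation of grading conventions. So the core of your argument is fine.

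One concrete claim in your write-up is false, however: $H^{\ALG}_{2}(\SL_N,\BZ)$ is \emph{not} torsion-free for $N\ge 2$. Already in Koszul degree zero the homology is $\BZ[x_0,x_1]/(x_0^N,\,Nx_0^{N-1}x_1)$, and the class $x_0^{N-1}x_1$ (bidegree $q^{2N+2}t^2$) is nonzero and annihilated by $N$, so it generates $N$-torsion; for $N=2$ this is precisely the well-known $2$-torsion of the stable Khovanov homology of $T(2,\infty)$, i.e.\ the torsion is a feature of the model rather than an accident to be ruled out. Consequently the final step of your argument --- upgrading the equality of Poincar\'e series to an isomorphism of bigraded abelian groups ``since both sides are torsion-free'' --- does not go through as stated. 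This does not sink the proof, because it is also not needed: Cautis's computation is carried out over a field, the monomial basis above \eqref{n=2} is explicitly a basis of the $\BQ$-homology, and the verification of \thmref{main conj} here is a comparison of graded dimensions over $\BQ$. You should either restrict the claim to rational coefficients, or, if you want an integral statement, compare the torsion of the Koszul model with the known integral answer rather than asserting there is none.
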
 

\subsection{$(3,\infty)$ case}

For $n=3$ 	we have new generators $x_2$ and $\xi_2$ in $\CH_3$, and the differential looks like
$$d_N(\xi_2)=Nx_0^{N-1}x_2+\binom{N}{2}x_0^{N-2}x_1^{2}.$$
We have a new homology generator
$$\mu_2=2Nx_2\xi_0+(N-1)x_1\xi_1-2x_0\xi_2,\qquad \deg \mu_2=q^{2N+6}t^3$$
and one can explicitly check the identity $d_N(\mu_2)=0$.

In zeroth Koszul homology we get $$\BQ[x_0,x_1,x_2]/\left(x_0^N,Nx_0^{N-1}x_1,Nx_0^{N-1}x_2+\binom{N}{2}x_0^{N-2}x_1^{2}\right).$$
If we multiply the third term by $x_1$ and subtract $x_2$ times the second one, we get $x_0^{N-2}x_1^3=0$. 
The monomial basis consists of all monomials not divisible by $x_0^{N}, x_0^{N-1}x_1, x_0^{N-1}x_2$ and  $x_0^{N-2}x_1^3$,
and the Poincar\'e series equals
\begin{align*}
\CP_3^{a=0}(q,t) & =q^{2N-2}+\frac{1-q^{2N-2}}{(1-q^2)(1-q^4t^2)(1-q^6t^4)}-\frac{q^{2N+8}t^6}{(1-q^4t^2)(1-q^8t^6)} \\[1ex]
         & = \frac{1-q^{2N}-q^{2N+2}t^2-q^{2N+4}t^4+q^{2N+4}t^2+q^{2N+6}t^4}{(1-q^2)(1-q^4t^2)(1-q^6t^4)}.
\end{align*}
Let us describe the relations between $\mu$'s.
We have 
\begin{align*}
d_N(\xi_0\xi_1) & =x_0^{N-1}\mu_1, \\
 d_N(2\xi_0\xi_2) & =2x_0^{N}\xi_2-\left(2Nx_0^{N-1}x_2+N(N-1)x_0^{N-2}x_1^{2}\right)\xi_0 \\
  & =-x_0^{N-1}\mu_2-(N-1)x_0^{N-2}x_1\mu_1, \\
d_N(2\xi_1\xi_2) & =2Nx_0^{N-1}x_1\xi_2-\left(2Nx_0^{N-1}x_2+N(N-1)x_0^{N-2}x_1^{2}\right)\xi_1 \\
    & = -Nx_0^{N-2}x_1\mu_2+2Nx_0^{N-2}x_2\mu_1.
\end{align*}
Therefore the monomials cannot be divisible by $x_0^{N-1}\mu_1$, $x_0^{N-1}\mu_2$ and $x_0^{N-2}x_1\mu_2$,
and the Poincar\'e series equals
$$
\CP_3^{a=1}(q,t)=\frac{(1-q^{2N-2})(q^{2N+4}t^3+q^{2N+6}t^5)-q^{4N+6}t^7(1-q^4)}{(1-q^2)(1-q^4t^2)(1-q^6t^4)}.
$$
Finally, the homology at level 2 is generated by $\mu_1\mu_2$ and the unique relation has the form:
$$d_N(2\xi_0\xi_1\xi_2)=x_0^{N-2}\mu_1\mu_2,$$
hence
$$
\CP_3^{a=2}(q,t)=\frac{q^{4N+10}t^8(1-q^{2N-4})}{(1-q^2)(1-q^4t^2)(1-q^6t^4)}.
$$
We obtain the following result:
\begin{theorem}
\label{n=3}
The Poincar\'e series for the rational homology $H_{3}^{\ALG}(\SL_N,\BQ)$ is given by the formula
\begin{multline*}
\CP_3(\SL_N,\BQ)(1-q^2)(1-q^4t^2)(1-q^6t^4) = 1-q^{2N}-q^{2N+2}t^2+q^{2N+4}(t^2+t^3-t^4)\\
                                              +q^{2N+6}(t^4+t^5) -q^{4N+2}t^3-q^{4N+4}t^5-q^{4N+6}t^7+q^{4N+10}(t^7+t^8)-q^{6N+6}t^{8}.
\end{multline*}
\end{theorem}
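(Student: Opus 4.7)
The plan is to decompose the complex $(\CH_3, d_N)$ along its $\xi$\hy grading $\eta$ (equivalently, by powers of $a$). Since $d_N$ lowers $\eta$ by exactly one, the rational homology splits as $H^{\ALG}_3(\SL_N,\BQ) = \bigoplus_{\eta=0}^{3} H^\eta$, and the stated Poincar\'e polynomial is the sum of four explicit pieces $\CP_3^{a=0}$, $\CP_3^{a=1}$, $\CP_3^{a=2}$, $\CP_3^{a=3}$. Each summand will be described as an explicit graded $\BQ[x_0,x_1,x_2]$\hy module.

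At $\eta = 0$, the homology is $\BQ[x_0,x_1,x_2]/I$ with $I=(x_0^N,\, x_0^{N-1}x_1,\, x_0^{N-1}x_2+\tfrac{N-1}{2}x_0^{N-2}x_1^2)$. Combining the last two generators as in the excerpt yields $x_0^{N-2}x_1^3\in I$, and a short check shows that the four monomials $x_0^N,\,x_0^{N-1}x_1,\,x_0^{N-1}x_2,\,x_0^{N-2}x_1^3$ form the leading ideal (with respect to lex order) of a Gr\"obner basis for $I$. The Hilbert series of the quotient is then a routine inclusion-exclusion, giving $\CP_3^{a=0}$.

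At $\eta = 1$, I would first show that the cocycles form a free $\BQ[x_0,x_1,x_2]$\hy module on $\mu_1,\mu_2$. Freeness follows directly by inspecting $\mu_1 = Nx_1\xi_0 - x_0\xi_1$ and $\mu_2 = 2Nx_2\xi_0 + (N-1)x_1\xi_1 - 2x_0\xi_2$: any relation $p\mu_1 + q\mu_2 = 0$ in the free module on $\xi_0,\xi_1,\xi_2$ forces $q = 0$ from the $\xi_2$\hy coefficient and then $p = 0$. Spanning reduces to a syzygy calculation: dividing the syzygy equation $p_0 d_N(\xi_0) + p_1 d_N(\xi_1) + p_2 d_N(\xi_2) = 0$ by $x_0^{N-2}$ and performing two successive substitutions $x_0 = 0$ expresses any syzygy in terms of two free parameters, which correspond precisely to $\mu_1$ and $\mu_2$. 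Next, I would identify the coboundaries with the submodule generated by $d_N(\xi_0\xi_1),\, d_N(\xi_0\xi_2),\, d_N(\xi_1\xi_2)$, expressed in the excerpt in terms of $\mu_1, \mu_2$. The $S$\hy polynomial between the two coboundary relations with leading terms in the $\mu_2$\hy component reduces, via the $\mu_1$\hy relation, to a new derived relation $x_0^{N-2}x_1^2\mu_1\equiv 0$, yielding a four-element Gr\"obner basis; inclusion-exclusion over the four corresponding leading terms produces $\CP_3^{a=1}$.

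At $\eta=2$, the cocycles are spanned by polynomial multiples of $\mu_1\mu_2$, and the single relation $d_N(2\xi_0\xi_1\xi_2)=x_0^{N-2}\mu_1\mu_2$ yields $\CP_3^{a=2}$. The top level $\eta=3$ contributes nothing: it is free on $\xi_0\xi_1\xi_2$ over the integral domain $\BQ[x_0,x_1,x_2]$, and $d_N(\xi_0\xi_1\xi_2) \neq 0$, so $d_N$ restricted to $\eta=3$ is injective and $H^3=0$. Summing the four Hilbert series and clearing the common denominator $(1-q^2)(1-q^4t^2)(1-q^6t^4)$ yields the stated polynomial. The main obstacle is the $\eta=1$ analysis, particularly verifying that the Gr\"obner basis completion for the coboundary relations terminates with exactly one new derived relation; once this is checked, all remaining steps are mechanical bookkeeping.
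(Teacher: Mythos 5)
Your proposal is correct and follows essentially the same route as the paper: split $(\CH_3,d_N)$ by $\xi$\hy degree, present each graded piece as an explicit quotient of a free $\BQ[x_0,x_1,x_2]$\hy module (with the level\hy one cocycles free on $\mu_1,\mu_2$ and the level\hy two cocycles free on $\mu_1\mu_2$), and read off Hilbert series from leading monomials. Your extra care at $\eta=1$ is well placed: the derived relation $x_0^{N-2}x_1^2\mu_1=0$ that you extract from the $S$\hy polynomial is genuinely needed — the paper's prose lists only the three leading terms $x_0^{N-1}\mu_1$, $x_0^{N-1}\mu_2$, $x_0^{N-2}x_1\mu_2$, yet its stated $\CP_3^{a=1}$ already accounts for this fourth one.
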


\section{\texorpdfstring{Projectors and $\SL_N$\hy homology of $(3,m)$\hy torus knots}{Projectors and slN homology of (3,m)-torus knots}}
\label{sec3}

In this section we give a conjectural formula for $\SL_N$\hy homology of $(3,m)$\hy torus knots for all $N$ and $m$. 
The construction is motivated by  the study of higher categorified Jones-Wenzl projectors, introduced by Cooper and Hogancamp \cite{CH}, although we do not pursue this analogy here.
These projectors are labeled by the standard Young tableaux (SYT) and categorify the projectors onto irreducible summands in the 
regular representation of the Hecke algebra. By a theorem of Rozansky \cite{Rozproj} (see also \cite{C,rose}), the homology of the categorified projector corresponding to the symmetric representation of $H_n$
coincides with the stable homology of the $(n,\infty)$\hy torus knot.  

\subsection{Decomposition of the homology}

We expect the Poincar\'e polynomials of homology of three-strand torus knots to be defined by the following conjecture. 

\conjTwo*
\begin{table}[p]
\rotatebox{90}{\parbox{\textheight}{
\newcommand{\RH}[1]{\makebox[0.86em]{#1}}
\extrarowsep=1.15mm
\begin{tabu}{|[lightgray]c|*{21}{c|[lightgray]}c||c}
\tabucline[lightgray]{1-23}
\backslashbox{$\ddagger$\kern-0.1em}{\kern-0.1em$t$} & \RH{0} & \RH{1} & \RH{2} & \RH{3} & \RH{4} & \RH{5} & \RH{6} & \RH{7} & \RH{8} & \RH{9}  & \RH{10} & \RH{11} & \RH{12} & \RH{13} & \RH{14} & \RH{15} & \RH{16} & \RH{17} & \RH{18} & \RH{19} & \RH{20} & \RH{21}\\\tabucline[black]{1-23}
6     &   &   &   & 1 &   & 1 &   &   & 1 &   & 1 &   & 1 &   & 1 &   & 1 &   & 1 &   & 1 &    \\\tabucline[lightgray]{1-23}
4     & 1 &   &   & 1 &   & 2 &   & 2 &   & 2 &   & 1 & 1 & 1 & 1 & 1 & 1 & 1 & 1 & 1 & 1 & 1  \\\tabucline[lightgray]{1-23}
$2 $  & 1 &   & 1 &   & 1 & 1 &   & 2 &   & 3 &   & 3 &   & 3 &   & 2 & 1 & 2 & 1 & 2 & 1 & 2  \\\tabucline[lightgray]{1-23}
$0 $  & 1 &   & 1 &   & 2 &   & 2 &   & 2 & 1 & 1 & 2 & 1 & 3 & 1 & 3 & 1 & 3 & 1 & 2 & 2 & 2  \\\tabucline[lightgray]{1-23}
$-2 $ &   &   &   &   & 1 &   & 1 &   & 2 &   & 2 &   & 2 & 1 & 1 & 2 & 1 & 3 & 1 & 3 & 1 & 3 & \quad {\Huge$\cdots$} \\\tabucline[lightgray]{1-23}
$-4 $ &   &   &   &   &   &   &   &   & 1 &   & 1 &   & 2 &   & 2 &   & 2 & 1 & 1 & 2 & 1 & 3  \\\tabucline[lightgray]{1-23}
$-6$  &   &   &   &   &   &   &   &   &   &   &   &   & 1 &   & 1 &   & 2 &   & 2 &   & 2 & 1  \\\tabucline[lightgray]{1-23}
$-8$  &   &   &   &   &   &   &   &   &   &   &   &   &   &   &   &   & 1 &   & 1 &   & 2 &    \\\tabucline[lightgray]{1-23}
$-10$ &   &   &   &   &   &   &   &   &   &   &   &   &   &   &   &   &   &   &   &   & 1 &    \\\tabucline[lightgray]{1-23}
\end{tabu}\\[1ex]
\begin{minipage}{0.5\textheight}
\caption{The unreduced rational $\SL_3$-homology of $T(3,16)$. See the caption of \thmref{table2} for detailed explanations. The first half of the table depicts the stable part, and the second half
the part diverging from $T(3,\infty)$.}
\end{minipage}\hfill
\begin{tabu}{c||*{10}{c|[lightgray]}c|c|[lightgray]}
\tabucline[lightgray]{2-13}
& \RH{22} & \RH{23} & \RH{24}  & \RH{25} & \RH{26} & \RH{27} & \RH{28} & \RH{29} & \RH{30} & \RH{31} & \RH{32} & \slashbox{\kern-0.1em$t$}{$\ddagger$\kern-0.1em}\\\tabucline[black]{2-13}
                    &   &   &   &   &   &   &   &   &   &   &    & 6      \\\tabucline[lightgray]{2-13}
                    & 1 &   & 1 &   & 1 &   & 1 &   & 1 &   & 1  & 4      \\\tabucline[lightgray]{2-13}
                    & 1 & 1 & 1 & 1 & 1 & 1 & 1 & 1 & 1 & 1 &    & 2      \\\tabucline[lightgray]{2-13}
                    & 1 & 2 & 1 & 2 & 1 & 2 & 1 & 2 &   & 1 &    & 0      \\\tabucline[lightgray]{2-13}
{\Huge\ldots}\qquad & 2 & 2 & 2 & 2 & 2 & 2 & 1 & 1 & 1 &   &    & $-2$  \\\tabucline[lightgray]{2-13}
                    & 1 & 2 & 2 & 2 & 1 & 1 & 1 &   &   &   &    & $-4 $  \\\tabucline[lightgray]{2-13}
                    & 1 & 2 & 1 & 1 & 1 &   &   &   &   &   &    & $-6 $  \\\tabucline[lightgray]{2-13}
                    & 1 & 1 & 1 &   &   &   &   &   &   &   &    & $-8 $  \\\tabucline[lightgray]{2-13}
                    & 1 &   &   &   &   &   &   &   &   &   &    & $-10$  \\\tabucline[lightgray]{2-13}
\end{tabu}
}}
\end{table}

The conjecture is expected to hold both for reduced and unreduced 
$\SL_N$\hy and HOMFLY-PT homology of $(3,m)$\hy torus knots, and we prove below that a similar decomposition holds for the Heegaard-Floer homology, too. 
For $N=2$, the answer agrees with the computations of Turner \cite{turner}. For $N=3$, we checked the answer on the computer both for large ($k=28$) and small 
torus knots and found a perfect agreement with the experimental data.

The existence of such decompositions was first conjectured by Oblomkov and Rasmussen \cite{OR}, but, to the knowledge of the authors, the explicit formulae for $\CP_T$ have not been written down before.

\subsection{Symmetric projector, two strands}

The unreduced HOMFLY-PT homology of the symmetric projector is a free algebra with generators $x_0,x_1,\xi_0,\xi_1$ with the degrees:
$$
\deg(x_0)=q^2,\quad \deg(x_1)=q^4t^2,\quad \deg(\xi_0)=a^2t,\quad \deg(\xi_1)=a^2q^2t^3.
$$
Therefore the Poincar\'e series of this homology equals:
$$
\CP_{\text{\scriptsize\young(12)}}=\frac{(1+a^2t)(1+a^2q^2t^3)}{(1-q^2)(1-q^4t^2)}.
$$
The reduced HOMFLY-PT homology is generated by $x_1$ and $\xi_1$, and the Poincar\'e series equals:
$$
\CP_{\text{\scriptsize\young(12)}}^{\RED}=\frac{(1+a^2q^2t^3)}{(1-q^4t^2)}.
$$
According to \thmref{main conj}, the $\SL_N$\hy homology can be described as the homology of the differential $d_N$ after regrading $a=q^N$.
The unreduced $\SL_N$\hy homology of the symmetric projector are then given by \eqref{n=2}:
$$
\CP_{\text{\scriptsize\young(12)}}^{d_N}=\frac{1-q^{2N}-q^{2N+2}t^2+q^{2N+4}t^2+q^{2N+4}t^3-q^{4N+2}t^3}{(1-q^2)(1-q^4t^2)}.
$$
Remark that the Euler characteristic of this projector is equal to the $S^2$-colored $\SL_N$\hy invariant of the unknot.
In the reduced homology one has $d_N=0$, so 
 $$
\CP_{\text{\scriptsize\young(12)}}^{d_N,\RED}=\frac{(1+q^{2N+2}t^3)}{(1-q^4t^2)}.
$$

\subsection{Antisymmetric projector, two strands}

The homology of the antisymmetric projector can be computed as follows.
The HOMFLY-PT homology is an algebra with two even generators $x_0,a_1$ and two odd generators $\xi_0,\theta_1$.
The degrees are equal to:
$$
\deg x_0=q^2,\quad \deg a_1=q^{-4}t^{-2},\quad\deg \xi_0=a^2t,\quad \deg \theta_1=q^{-2}a^2t. 
$$
Therefore the Poincar\'e series of this homology equals:
$$
\CP_{\text{\scriptsize\young(1,2)}}=\frac{(1+a^2t)(1+a^2q^{-2}t)}{(1-q^2)(1-q^{-4}t^{-2})}.
$$
The reduced Poincar\'e series equals
$$
\CP_{\text{\scriptsize\young(1,2)}}^{\RED}=\frac{(1+a^2q^{-2}t)}{(1-q^{-4}t^{-2})}.
$$ 
The differential is given by the formula 
$$d_N(\xi_0)=x_0^{N},\quad d_N(\theta_1)=Nx_0^{N-1}.$$
The homology is generated by $x_0,a_1$ and $N\xi_0-x_0\theta_1$ modulo relation $x_0^{N-1}=0$.
The Poincar\'e series has the form:
$$
\CP_{\text{\scriptsize\young(1,2)}}^{d_N}=\frac{(1-q^{2N-2})(1+q^{2N}t)}{(1-q^2)(1-q^{-4}t^{-2})}.
$$
The reduced Poincar\'e series equals:
$$
\CP_{\text{\scriptsize\young(1,2)}}^{d_N,\RED}=\frac{(1+q^{2N-2}t)}{(1-q^{-4}t^{-2})}.
$$ 
There is a natural embedding $\CH^{\ALG}(1,\infty)\hookrightarrow \CH^{\ALG}(2,\infty)$, and the Poincar\'e series of the quotient 
equals $-\CP_{\text{\scriptsize\young(1,2)}}.$ In other words, we have an identity
$$
\CP_{\text{\scriptsize\young(12)}}+\CP_{\text{\scriptsize\young(1,2)}}=\CP_{\text{\scriptsize\young(1)}}.
$$
Indeed, in $\CH^{\ALG}(1,\infty)$ we have generators $\xi_0,x_0$ and the differential $d_N(\xi_0)=x_0^{N}$.
In $\CH^{\ALG}(2,\infty)$ we add the generators $\xi_1,x_1$ and the differential $d_N(\xi_1)=Nx_0^{N-1}x_1$.
The quotient is spanned by the monomials containing either $x_1$ or $\xi_1$. If we introduce the formal variable $\theta_1=\xi_1/x_1$,
then the quotient would be spanned by all monomials divisible by $x_1$. On the other hand,
$d_N(\theta_1)=Nx_0^{N-1}$.

\subsection{Homology of $(2,m)$\hy torus knots}

The  unreduced $\SL_N$\hy homology of $(2,m)$\hy torus knots was computed by Cautis in \cite[Example 10.3.3]{C}, and can be reformulated as 
following:

\begin{theorem}(\cite{C})
The Poincar\'e polynomial of the $\SL_N$\hy homology of the $(2,2k+1)$\hy torus knot is given by the following equation:
$$
\CP(T(2,2k+1), \SL_N)=\CP_{\text{\scriptsize\young(12)}}^{d_N}+q^{4k}t^{2k}\CP_{\text{\scriptsize\young(1,2)}}^{d_N}.
$$
The same decomposition holds for the reduced homology.
\end{theorem}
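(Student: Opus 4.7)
The plan is direct verification, since the result is attributed to Cautis in [C, Example~10.3.3]. First, I would write out Cautis's explicit formula for the Poincar\'e polynomial of $\SL_N$\hy homology of $T(2, 2k+1)$ as a rational function in $q, t$ depending on $k$. Then, I would substitute the closed-form expressions for $\CP_{\text{\scriptsize\young(12)}}^{d_N}$ and $\CP_{\text{\scriptsize\young(1,2)}}^{d_N}$ derived in the preceding two subsections, and check that the sum $\CP_{\text{\scriptsize\young(12)}}^{d_N} + q^{4k}t^{2k}\, \CP_{\text{\scriptsize\young(1,2)}}^{d_N}$ coincides with Cautis's formula.

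To carry out the comparison, I would first rewrite
$$
\CP_{\text{\scriptsize\young(1,2)}}^{d_N} = \frac{(1-q^{2N-2})(1+q^{2N}t)}{(1-q^2)(1-q^{-4}t^{-2})} = -\frac{q^4 t^2 (1-q^{2N-2})(1+q^{2N}t)}{(1-q^2)(1-q^4 t^2)},
$$
so that both pieces live over the common denominator $(1-q^2)(1-q^4 t^2)$. The identity then reduces to a polynomial identity in $q, t$ whose $k$-dependence enters only through the monomial factor $q^{4k+4}t^{2k+2}$. A useful sanity check is $k=0$, where $T(2,1)$ is the unknot and the formula collapses to the previously-noted identity $\CP_{\text{\scriptsize\young(12)}}^{d_N} + \CP_{\text{\scriptsize\young(1,2)}}^{d_N} = \CP_{\text{\scriptsize\young(1)}}^{d_N} = (1-q^{2N})/(1-q^2)$.

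The reduced version is handled analogously, using the reduced formulas $\CP_{\text{\scriptsize\young(12)}}^{d_N,\RED} = (1+q^{2N+2}t^3)/(1-q^4 t^2)$ and $\CP_{\text{\scriptsize\young(1,2)}}^{d_N,\RED} = (1+q^{2N-2}t)/(1-q^{-4}t^{-2})$ together with Cautis's formula for the reduced homology of $T(2, 2k+1)$. The main obstacle is bookkeeping: the antisymmetric projector has generators of negative $(q,t)$\hy degree, so $\CP_{\text{\scriptsize\young(1,2)}}^{d_N}$ is a priori a formal Laurent series rather than a positive power series, and the shift $q^{4k}t^{2k}$ is precisely what is needed to move the antisymmetric contribution into a range where it combines with the stable symmetric piece to reproduce the actual (positively-graded) Khovanov-Rozansky homology. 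Verifying that the combinatorics of the shifts aligns with Cautis's explicit formula is the bulk of the work.
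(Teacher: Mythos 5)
Your proposal is correct and matches the paper's (implicit) approach: the paper offers no independent proof but presents the theorem precisely as a reformulation of Cautis's explicit answer, i.e.\ a direct algebraic comparison of his formula with $\CP_{\text{\scriptsize\young(12)}}^{d_N}+q^{4k}t^{2k}\CP_{\text{\scriptsize\young(1,2)}}^{d_N}$ over the common denominator $(1-q^2)(1-q^4t^2)$, exactly as you outline. Your $k=0$ sanity check is also consistent with the identity $\CP_{\text{\scriptsize\young(12)}}+\CP_{\text{\scriptsize\young(1,2)}}=\CP_{\text{\scriptsize\young(1)}}$ recorded earlier in the paper.
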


The reduced homology of $(2,2k+1)$\hy torus knots is well known, see e.g. \cite{DGR}.

\subsection{Symmetric projector, three strands}
As above, the HOMFLY-PT homology of the symmetric projector is expected to coincide with the algebra $\CH_3$, its Poincar\'e series equals (cf. \cite{DGR}):
$$
\CP_{\text{\scriptsize\young(123)}}=\frac{(1+a^2t)(1+a^2q^2t^3)(1+a^2q^4t^5)}{(1-q^2)(1-q^4t^2)(1-q^6t^4)},\quad
\CP_{\text{\scriptsize\young(123)}}^{\RED}=\frac{(1+a^2q^2t^3)(1+a^2q^4t^5)}{(1-q^4t^2)(1-q^6t^4)}.
$$
The unreduced $\SL_N$\hy homology of the symmetric projector is given by \thmref{n=3}:
\begin{align*}
\CP_{\text{\scriptsize\young(123)}}^{d_N}=\ &\frac{1}{(1-q^2)(1-q^4t^2)(1-q^6t^4)}(1-q^{2N}-q^{2N+2}t^2+q^{2N+4}(t^2+t^3-t^4)+\\[2ex]
 &        q^{2N+6}(t^4+t^5)  -q^{4N+2}t^3-q^{4N+4}t^5-q^{4N+6}t^7+q^{4N+10}(t^7+t^8)-q^{6N+6}t^{8}).
\end{align*}
In the reduced homology we have $d_N=0$ for $N>2$ and $d_2(\xi_2)=x_1^2$. Therefore
$$
\CP_{\text{\scriptsize\young(123)}}^{d_N,\RED}=\frac{(1+q^{2N+2}t^3)(1+q^{2N+4}t^5)}{(1-q^4t^2)(1-q^6t^4)},\ N>2;\quad
\CP_{\text{\scriptsize\young(123)}}^{d_2,\RED}=\frac{(1-q^8t^4)(1+q^{6}t^3)}{(1-q^4t^2)(1-q^6t^4)}.
$$
We also introduce a differential $d_0$ on the reduced homology by the formula $d_0(\xi_2)=x_1$.
It is clear that
$$
\CP_{\text{\scriptsize\young(123)}}^{d_0,\RED}=\frac{(1+a^2q^2t^3)}{(1-q^6t^4)}.
$$

\subsection{Antisymmetric projector, three strands}

The HOMFLY-PT homology of the antisymmetric projector is an algebra with two even generators $x_0,a_1,a_2$ and two odd generators $\xi_0,\theta_1,\theta_2$.
The degrees are equal to:
$$
\deg x_0=q^2,\quad \deg a_1=q^{-4}t^{-2},\quad \deg a_2=q^{-6}t^{-2},$$
$$\deg \xi_0=a^2t,\quad \deg \theta_1=a^2q^{-2}t,\quad \deg \theta_2=a^2q^{-4}t. $$
The Poincar\'e series equals:
$$
\CP_{\text{\scriptsize\young(1,2,3)}}=\frac{(1+a^2t)(1+a^2q^{-2}t)(1+a^2q^{-4}t)}{(1-q^2)(1-q^{-4}t^{-2})(1-q^{-6}t^{-2})},\quad
\CP_{\text{\scriptsize\young(1,2,3)}}^{\RED}=\frac{(1+a^2q^{-2}t)(1+a^2q^{-4}t)}{(1-q^{-4}t^{-2})(1-q^{-6}t^{-2})}.
$$
The differential is given by the formula 
$$
d_N(\xi_0)=x_0^{N},\quad d_N(\theta_1)=Nx_0^{N-1},\quad d_N(\theta_2)=\binom{N}{2}x_0^{N-2}.
$$
The homology has even generators $x_0,a_1,a_2$ and two odd generators $N\xi_0-x_0\theta_1,\frac{N-1}{2}\theta_1-x_0\theta_2$ modulo relation $x_0^{N-2}=0$, so
$$
\CP_{\text{\scriptsize\young(1,2,3)}}^{d_N}=\frac{(1-q^{2N-4})(1+q^{2N-2}t)(1+q^{2N}t)}{(1-q^2)(1-q^{-4}t^{-2})(1-q^{-6}t^{-2})}.
$$
Note that for $N=2$ this homology vanishes, as expected. In reduced homology $d_N=0$, so
$$
\CP_{\text{\scriptsize\young(1,2,3)}}^{d_N,\RED}=\frac{(1+q^{2N-2}t)(1+q^{2N}t)}{(1-q^{-4}t^{-2})(1-q^{-6}t^{-2})}.
$$
The differential $d_0$ on reduced homology is given by the formula $d_0(\theta_2)=a_1,$ and
$$
\CP_{\text{\scriptsize\young(1,2,3)}}^{d_0,\RED}=\frac{(1+a^2q^{-2}t)}{(1-q^{-6}t^{-2})}.
$$

\subsection{Hook-shaped projectors}

We  formally  introduce two power series $\CP_{\text{\scriptsize\young(12,3)}}$ and $\CP_{\text{\scriptsize \young(13,2)}}$
such that the following equations hold:
$$
\CP_{\text{\scriptsize\young(12)}}=\CP_{\text{\scriptsize\young(123)}}+\CP_{\text{\scriptsize\young(12,3)}},\quad \CP_{\text{\scriptsize\young(1,2)}}=\CP_{\text{\scriptsize\young(1,2,3)}}+\CP_{\text{\scriptsize\young(13,2)}}.
$$
One can check that for HOMFLY-PT homology these series are given by
$$
\CP_{\text{\scriptsize\young(12,3)}}=\frac{(1+a^2t)(1+a^2q^{-2}t)(1+a^2q^2t^3)}{(1-q^2)(1 - q^4 t^2) (1 - q^{-6} t^{-4})},
$$
$$
\CP_{\text{\scriptsize\young(13,2)}}=\frac{(1+a^2t)(1+a^2q^{-2}t)(1+a^2q^2t^3)}{(1-q^2)(1 - q^{-4}t^{-2}) (1 - q^{6} t^{2})},
$$
$$
\CP_{\text{\scriptsize\young(12,3)}}^{\RED}=\frac{ (1+a^2q^{-2}t)(1+a^2q^2t^3)}{ (1 - q^4 t^2) (1 - q^{-6} t^{-4})},\quad 
\CP_{\text{\scriptsize\young(13,2)}}^{\RED}=\frac{ (1+a^2q^{-2}t)(1+a^2q^2t^3)}{ (1 - q^{-4}t^{-2}) (1 - q^{6} t^{2})}.
$$
For the unreduced $\SL_N$\hy homology we have:
\begin{equation}
\label{hook1}
\CP_{\text{\scriptsize\young(12,3)}}^{d_N}=\frac{(1 + q^{2N}t) (1 - q^{2N-2} - q^{2N+2} t^2 + q^{2N + 4}t^2 +
     q^{2N + 4} t^3 - 
    q^{4 N} t^3)}{(1 - q^2) (1 - q^4 t^2) (1 - q^{-6} t^{-4})}
\end{equation}
\begin{equation}
\label{hook2}
\CP_{\text{\scriptsize \young(13,2)}}^{d_N}=\frac{(1 + q^{2N}t) (1 - q^{2N-2} - q^{2N+2} t^2 + q^{2N + 4}t^2 +
     q^{2N + 4} t^3 - 
    q^{4 N} t^3)}{(1 - q^2) (1 - q^{-4}t^{-2}) (1 - q^{6} t^{2})}.
\end{equation}
For the reduced $\SL_N$\hy homology we have
\[
\begin{alignedat}{4}
& \CP_{\text{\scriptsize\young(12,3)}}^{d_N,\RED}=\frac{ (1+q^{2N-2}t)(1+q^{2N+2}t^3)}{ (1 - q^4 t^2) (1 - q^{-6} t^{-4})},\quad
&& \CP_{\text{\scriptsize\young(13,2)}}^{d_N,\RED}=\frac{ (1+q^{2N-2}t)(1+q^{2N+2}t^3)}{ (1 - q^{-4}t^{-2}) (1 - q^{6} t^{2})}\ (N>2); \\
& \CP_{\text{\scriptsize\young(12,3)}}^{d_2,\RED}=\frac{ (1-q^2)(1+q^{6}t^3)}{ (1 - q^4 t^2) (1 - q^{-6} t^{-4})},
&& \CP_{\text{\scriptsize\young(13,2)}}^{d_2,\RED}=\frac{ (1+q^{2}t)}{ (1 - q^{-4}t^{-2})}=\CP_{\text{\scriptsize\young(1,1)}}^{d_2,\RED}.
\end{alignedat}
\]
These can also be interpreted as homology of $d_N$ acting on some algebras.

For the diagram $\text{\scriptsize\young(12,3)}$, the HOMFLY-PT homology
is generated by $x_0,x_1,b_2,\xi_0,\xi_1,\theta_1$, and the differential has the form
$$
d_N(\xi_0)=x_0^N,\quad d_N(\xi_1)=Nx_0^{N-1}x_1,\quad d_N(\theta_1)=Nx_0^{N-1}+\binom{N}{2}x_0^{N-2}x_1^2b_2.
$$
Note that this differential can be obtained from the symmetric one: we set $\theta_1=\xi_2/x_2$ and $b_2=1/x_2$, so
$$
d_N(\theta_1)=\frac{1}{x_2}d_N(\xi_2)=\frac{1}{x_2}(Nx_0^{N-1}x_2+\binom{N}{2}x_0^{N-2}x_1^2).
$$
For the diagram $\text{\scriptsize \young(13,2)}$, the HOMFLY-PT homology
is generated by $x_0,a_1,x_2,\xi_0,\theta_1,\xi_2$ and the differential has the form
$$
d_N(\xi_0)=x_0^{N-1},\quad d_N(\theta_1)=Nx_0^{N-1},\quad d_N(\xi_2)=\binom{N}{2}x_0^{N-2}x_2.
$$
One can check that the Poincar\'e series for the homology of $d_N$ agree with \eqref{hook1} and \eqref{hook2}.

Finally, we define $d_0$ for the diagram $\text{\scriptsize\young(12,3)}$ by the equation $d_0(\theta_1)=x_1b_2$,
and for the diagram $\text{\scriptsize\young(12,3)}$ by the equation $d_0(\xi_2)=x_2a_1$.
One can check that
$$
\CP_{\text{\scriptsize\young(12,3)}}^{d_0,\RED}=\frac{(1 - q^{-2}t^{-2}) (1 + a^2q^2t^3)}{(1 - q^4 t^2) (1 - q^{-6} t^{-4})},\quad 
\CP_{\text{\scriptsize\young(13,2)}}^{d_0,\RED}=\frac{(1-q^2)(1+a^2q^{-2}t)}{(1 - q^{-4} t^{-2})(1 - q^6 t^2)}. 
$$

\subsection{Homology of $(3,m)$\hy torus knots}

We summarize the known evidence for the \thmref{conj:projectors} in the following theorem.

\begin{theorem}
\begin{enumerate}
\item[1.] For the HOMFLY-PT homology, \thmref{conj:projectors} agrees with the ``refined Chern-Simons invariants'' defined in \cite{AS}.
The conjecture is equivalent to the main conjecture of \cite{GORS}.

\item[2.] The conjecture holds for $\SL_2$\hy homology.

\item[3.] The reduced homology of $d_0$ agrees with the hat version of the Heegaard-Floer homology of $(3,m)$\hy torus links
after regrading $a=t^{-1}$ (cf. \cite[Section 3.8]{DGR}).
\end{enumerate}
\end{theorem}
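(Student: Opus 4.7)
The plan is to treat the three parts separately, since each asserts compatibility of our four Poincar\'e series $\CP_T$ with an independently known (or conjecturally known) invariant of $T(3,m)$.

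For part (1), I would start from the explicit formula of \cite{GN} expressing the refined Chern-Simons invariant of $T(n,m)$ as a sum indexed by standard Young tableaux of size $n$, each summand being a rational function in $q,t,a$ determined by the shape and the content statistics of $T$. Specializing to $n=3$ gives four summands, and the task is to verify the identity
\[
\text{(refined CS summand for $T$)} \,\doteq\, q^{\alpha(T,k)} t^{\beta(T,k)} \CP_T,
\]
with the shifts $(\alpha,\beta)$ matching the ones written in \thmref{conj:projectors}. This reduces to a direct algebraic comparison of the product formulas for $\CP_{\text{\scriptsize\young(123)}}, \CP_{\text{\scriptsize\young(12,3)}}, \CP_{\text{\scriptsize\young(13,2)}}, \CP_{\text{\scriptsize\young(1,2,3)}}$ written in the previous subsections against the four summands from \cite{GN}, using standard hook-length manipulations. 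Equivalence with the main conjecture of \cite{GORS} then follows because that conjecture also writes the HOMFLY-PT homology of $T(n,m)$ as a sum over SYT indexing irreducibles of the symmetric group, and for $n=3$ the four Specht module characters produce exactly our four rational functions; so the two conjectural decompositions are matched tableau-by-tableau.

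For part (2), I would specialize $N=2$ in the formulae (\ref{hook1}),(\ref{hook2}) and in our expressions for $\CP_{\text{\scriptsize\young(123)}}^{d_2}$ and $\CP_{\text{\scriptsize\young(1,2,3)}}^{d_2}$ (noting that the latter vanishes, as expected from the $\SL_2$ side). Assembling the four pieces with the shifts prescribed by \thmref{conj:projectors} gives a candidate Poincar\'e series in each residue class $m\equiv 1,2\pmod 3$. This series must then be compared to the closed form of the Khovanov homology of $T(3,m)$ computed by Turner \cite{turner} (and in the reduced case also by Sto\v{s}i\'c); the claim is a routine, if tedious, identity of rational functions that can be checked after clearing denominators and matching numerators.

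For part (3), the task is to identify $H^\RED(\CH_3,d_0)$, suitably assembled over the four SYT, with $\widehat{HF}$ of $T(3,m)$. I would first compute the reduced $d_0$-homologies for each tableau from the explicit generators and differentials written above (these are all easy: $d_0$ kills one odd generator against one even generator in each case, so the homology is a free (super)algebra on the remaining variables). Then, under the regrading $a=t^{-1}$, I would compare the resulting Poincar\'e series, weighted by the $(q,t)$-shifts of \thmref{conj:projectors}, with the explicit formula for $\widehat{HF}$ of $(3,m)$ torus links obtained from the Ozsv\'ath--Szabo computation, or equivalently from the $\tau$-function/L-space description.

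The main obstacle is part (1): the notational translation between the refined Chern-Simons summands of \cite{GN}, which are written using hook contents and the Macdonald-type pairing, and our compact product formulas is delicate, and one must be careful about overall normalization conventions (the overall $q^{3k(N-1)-2}$ or $q^{3k(N-1)-3}$ shifts) before the term-by-term comparison becomes an obvious identity. Parts (2) and (3) are essentially bookkeeping once part (1) and the known computations of \cite{turner} and \cite{DGR} are in hand.
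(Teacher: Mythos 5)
Your plan coincides with the paper's own (very terse) proof: all three parts are established by assembling the explicit rational functions $\CP_T$ with the shifts of \thmref{conj:projectors} and comparing term by term against the SYT-sum of \cite{GN} and \cite[Conjecture 6.8]{DGR}/\cite{GORS} for part (1), Turner's closed-form $\SL_2$\hy answer \cite{turner} for part (2), and the known $\widehat{HF}$ of $(3,m)$\hy torus knots \cite[Section 6.12]{DGR} for part (3). One small correction to your part (3): for the two hook tableaux the differential $d_0$ sends the odd generator to a \emph{product} of two even generators ($x_1b_2$, resp.\ $x_2a_1$), so the reduced $d_0$\hy homology there is a quotient by a principal ideal rather than a free superalgebra --- this is the source of the numerator factors $(1-q^{-2}t^{-2})$ and $(1-q^2)$ in $\CP_{\text{\scriptsize\young(12,3)}}^{d_0,\RED}$ and $\CP_{\text{\scriptsize\young(13,2)}}^{d_0,\RED}$.
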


\begin{proof}
The refined Chern-Simons invariants have been been presented as sums over SYT in \cite{GN}, and the conjectural HOMFLY-PT homology 
of $(3,m)$\hy torus knots was first described in \cite[Conjecture 6.8]{DGR} and later reformulated in \cite{GORS}. The $\SL_2$\hy homology were computed by Turner in \cite{turner},
and one can check his answers agree with the ones provided by  \thmref{conj:projectors}:
\begin{multline*}
\CP(T(3,3k+1),d_2)=\frac{1+q^2+q^4t^2+q^8t^3+q^{10}t^5+q^{12}t^5}{1-q^6t^4}+\\ q^{6k}t^{4k}\frac{(1-q^6t^2)(1+q^4t)}{(1-q^4t^2)(1-q^{-6}t^{-4})}+q^{6k}t^{4k}\frac{1+q^4t}{1-q^{-4}t^{-2}},
\end{multline*}
\begin{multline*}
\CP(T(3,3k+2),d_2)=\frac{1+q^2+q^4t^2+q^8t^3+q^{10}t^5+q^{12}t^5}{1-q^6t^4}+\\ q^{6k}t^{4k}\frac{(1-q^6t^2)(1+q^4t)}{(1-q^4t^2)(1-q^{-6}t^{-4})}+q^{6k+4}t^{4k+2}\frac{1+q^4t}{1-q^{-4}t^{-2}}.
\end{multline*}
Similar decomposition for $\SL_2$ homology of $(3,m)$\hy torus knots was obtained in \cite{OR}.
The Heegaard-Floer homology of torus knots is well known, see e.g \cite[Section 6.12]{DGR} for its description for $(3,m)$\hy torus knots.
One can compare it with:
\begin{multline*}
\CP^{\RED}(T(3,3k+1),d_0)=\frac{1+q^{2}t}{1-q^6t^4}+q^{6k}t^{4k}\frac{(1-q^{-2}t^{-2})(1+q^2t)}{(1-q^4t^2)(1-q^{-6}t^{-4})}+\\
q^{6k}t^{4k}\frac{(1-q^2)(1+q^{-2}t^{-1})}{(1-q^{-4}t^{-2})(1-q^6t^2)}+q^{12k}t^{6k}\frac{1+q^{-2}t^{-1}}{1-q^{-6}t^{-2}}, 
\end{multline*}
\begin{multline*}
\CP^{\RED}(T(3,3k+2),d_0)=\frac{1+q^{2}t}{1-q^6t^4}+q^{6k}t^{4k}\frac{(1-q^{-2}t^{-2})(1+q^2t)}{(1-q^4t^2)(1-q^{-6}t^{-4})}+\\
q^{6k+4}t^{4k+2}\frac{(1-q^2)(1+q^{-2}t^{-1})}{(1-q^{-4}t^{-2})(1-q^6t^2)}+q^{12k+4}t^{6k+2}\frac{1+q^{-2}t^{-1}}{1-q^{-6}t^{-2}}.
\end{multline*}
\end{proof}

For $N=3$, we checked the conjecture up to the $(3,83)$\hy torus knot.

\section{\texorpdfstring{Data for stable $\SL_3$\hy homology}{Data for stable sl3-homology}}
\label{sec4}
In this section, the stable $\SL_3$\hy homology of torus knots is compared against FoamHo calculations \cite{foam}, which
were conducted on a Xeon CPU E5-2620 with 128 GB RAM. 
Note that the quantum degree of the stable part of the $\SL_3$\hy homology of an actual $(n,m)$-torus knot is shifted
by $2(nm - n - m)$ relative to the homology of the $(n,\infty)$-torus knot.

\subsection{$\mathbf{n=2}$}

The unreduced Poincar\'e series is given by \eqref{n=2}:
$$
\CP_2^{\ALG}(\SL_3,\BQ)=\frac{1 - q^6 - q^8 t^2 + q^{10}t^2 + q^{10}t^3 - q^{14}t^3}{(1 - q^2) (1 - q^4t^2)}.
$$
The reduced Poincar\'e series equals:
$$
\CP_2^{\ALG,\RED}(\SL_3,\BQ)=\frac{1 + q^8 t^3}{1 - q^4 t^2}.
$$
All of homology is stable, e.g. the first divergence when comparing with the $(2,201)$\hy torus knot
occurs at homological degree $202$.

\subsection{$\mathbf{n=3}$}

The unreduced Poincar\'e series is given by \thmref{n=3}:

\begin{multline*}
\CP_3^{\ALG}(\SL_3,\BQ)(1 - q^2) (1 - q^4 t^2) (1 - q^6 t^4)= \\
1 - q^6 - q^8 t^2 + q^{10}t^2 + q^{10}t^3 - q^{14}t^3 - q^{10}t^4 + 
   q^{12}t^4 + q^{12}t^5 - q^{16}t^5 - q^{18}t^7 + q^{22}t^7 + q^{22}t^8 - 
   q^{24}t^8.
\end{multline*}
The reduced Poincar\'e series equals:
$$
\CP_3^{\ALG,\RED}(\SL_3,\BQ)=\frac{(1 + q^8 t^3) (1 + q^{10}t^5)}{(1 - q^4 t^2) (1 - q^6 t^4)}.
$$
Compared with the rational homology of the $(3,83)$\hy torus knot (calculation took 6 hours and 1.5 GB RAM),
the first divergence is at $q^{168}t^{112}$, both for unreduced and for reduced homology.

\subsection{$\mathbf{n=4}$}

The computations of the Koszul homology were done with Singular \cite{singular},
a computer algebra system. The unreduced Poincar\'e series equals:

\begin{multline*}
\CP_4^{\ALG}(\SL_3,\BQ)(1 - q^2) (1 - q^4 t^2) (1 - q^6 t^4) (1 - q^8 t^6)= \\
1 - q^6 - q^8 t^2 + q^{1} t^2 + q^{10}t^3 - q^{14}t^3 - q^{10}t^4 + 
   q^{12}t^4 + q^{12}t^5 - q^{16}t^5 - q^{12}t^6 + q^{14}t^6 + q^{14}t^7 - 
   2 q^{18}t^7 + \\
       q^{22}t^7 + q^{22}t^8 - q^{24}t^8 - q^{20}t^9 + q^{24}t^9 +
    q^{24}t^{10} - q^{26}t^{10}- q^{22}t^{11}+ q^{26}t^{11} + q^{26}t^{12} - 
   q^{28}t^{12} - q^{30}t^{14} + 
   q^{36}t^{14}
\end{multline*}
The reduced Poincar\'e series equals:
$$
\CP_4^{\ALG,\RED}(\SL_3,\BQ)=\frac{(1 + q^8 t^3) (1 + q^{10}t^5) (1 - q^{12}t^6)}{(1 - q^4 t^2) (1 - 
   q^6 t^4) (1 - q^8 t^6)}.
$$
Compared with the rational homology of the $(4,35)$\hy torus knot (computation took 12 days and 25 GB RAM),
the first divergence is at $q^{72}t^{54}$, both for unreduced and for reduced homology.

\subsection{$\mathbf{n=5}$}

The unreduced Poincar\'e series (again computed by Singular) equals:

\begin{multline*}
\CP_5^{\ALG}(\SL_3,\BQ)(1 - q^2) (1 - q^4 t^2) (1 - q^6 t^4) (1 - 
  q^8 t^6) (1 - q^{10} t^8)= \\
     1 - q^6 - q^8 t^2 + q^{10} t^2 + q^{10} t^3 - q^{14} t^3 - q^{10} t^4 + 
q^{12} t^4 + q^{12} t^5 - q^{16} t^5 - q^{12} t^6 + q^{14} t^6 + q^{14} t^7 - 
2 q^{18} t^7 + \\
    +q^{22} t^7 - q^{14} t^8 + q^{16} t^8 + q^{22} t^8 - q^{24} t^8 +
 q^{16} t^9 - 2 q^{20} t^9 + q^{24} t^9 + q^{24} t^{10} - q^{26} t^{10} - 
2 q^{22} t^{11} + 2 q^{26} t^{11} +\\
    2 q^{26} t^{12} - 2 q^{28} t^{12} - q^{24} t^{13} +
 q^{28} t^{13} + q^{26} t^{14} - 2 q^{30} t^{14} + q^{36} t^{14} - q^{28}t^{15} + 
q^{30}t^{15} + q^{32}t^{15} - q^{34} t^{15} + q^{30} t^{16} \\
    -q^{32} t^{16} - 
q^{34}t^{16} + q^{38}t^{16} + q^{34} t^{17} - q^{36} t^{17} - q^{36} t^{18} + 
2 q^{40}t^{18} - q^{42} t^{18} + q^{36} t^{19} - q^{38} t^{19} + q^{40} t^{19} - 
q^{42} t^{19} \\
    - q^{38} t^{20} + 2 q^{42} t^{20} - q^{44} t^{20} + q^{42} t^{21} - 
q^{44}t^{21}+ q^{44}t^{22} - q^{46} t^{22} - q^{46} t^{23} + 
q^{50}t^{23}.
\end{multline*}
The reduced Poincar\'e series equals:
$$
\CP_5^{\ALG,\RED}(\SL_3,\BQ)=\frac{(1 + q^8 t^3) (1 + q^{10} t^5) (1 - q^{12} t^6 - q^{14} t^8 + q^{18} t^{10} + 
q^{18} t^{11} - q^{26} t^{15})}{(1 - q^4 t^2) (1 - q^6 t^4) (1 - 
    q^8 t^6) (1 - q^{10} t^8)}
$$
Compared with the rational homology of the $(5,14)$\hy torus knot (computation took 15 days and 33 GB RAM),
the first divergence is at $q^{30} t^{24}$, both for unreduced and for reduced homology.

\newcommand{\arXiv}[1]{\href{http://arxiv.org/abs/#1}{arXiv:#1}}

\end{document}